\newtheorem{thm}{Theorem}[section]
\newtheorem{prop}[thm]{Proposition}
\newtheorem{lem}[thm]{Lemma}
\newtheorem{cor}[thm]{Corollary}
\theoremstyle{definition}
\theoremstyle{remark}
\newtheorem{rem}[thm]{Remark}
\numberwithin{equation}{section}
\newcommand{\R}{\mathbb{R}^n}
\begin{document}

\title{An optimization problem in heat conduction with minimal temperature constraint, interior heating and exterior insulation} 
\author{Hui Yu}
\address{Department of Mathematics, the University of Texas at Austin}
\email{hyu@math.utexas.edu}

\begin{abstract}
We show the existence and optimal regularity of  the optimal temperature configuration in a problem in heat conduction with minimal temperature constraint, interior heating and exterior insulation. Regularity of the two free boundaries is also studied.\end{abstract}

 \maketitle

\tableofcontents

%%%%%%%%%%%%%%%%%%%%%%%%%%%%%%%%%%%%%%%%%%%%%%%%%%%%
%%%%%%%%%%%%%%%%%%%%%%%%%%%%%%%%%%%%%%%%%%%%%%%%%%%%
\section{Introduction}
In this paper we discuss an optimization problem in heat conduction that may be briefly described as follows: We want to keep the temperature in a room above a given temperature profile using heating sources inside the room and insulation material of a given volume outside the room. The optimal configuration is the one that takes the least energy.

Mathematically, given a bounded smooth domain $D\subset\R$, a smooth non-negative function $\phi:\R\to\mathbb{R}$ compactly supported in $D$, and a positive number $m>0$, we seek a function $u:\R\to\mathbb{R}$ with $|\{u>0\}\backslash D|=m$ and $u\ge\phi$. Here $|E|$ denotes the Lebesgue measure of a set $E$. We also assume $\Delta u=0$ in $\{u>0\}\backslash D$ due to insulation, and $\Delta u\le 0$ in $D$ due to interior heating.   

Among this class of functions an optimizer should minimize a certain functional corresponding to the energy taken by the interior heating sources. The most natural functional seems to be the total mass of $-\Delta u$ in $D$ $$\int_D -\Delta u dx.$$ However, this functional depends on the shape of $\{u>0\}$ in a highly nonlocal fashion and requires new ideas. Consequently we propose, as a replacement,  to study the Dirichlet energy 
$$\int \frac{|\nabla u|^2}{2} dx.$$ 

These two functionals are of the same order. 

Intuitively, to save energy, one would like to make $u$ as low as possible subject to $u\ge\phi$, and hence $u$ would solve the obstacle problem in $D$ with $\phi$ as the obstacle. Now since $0\le u\le \max{\phi}$ along $\partial D$ and $\phi$ is compactly supported in $D$, one has $c\le\phi\le C$ in the contact set $\{u=\phi\}$ for some $c$ and $C$ depending only on $\phi$ and $D$. Hence with Gauss-Green theorem and the fact that $-\Delta u$ is supported in the contact set, one has the following formal calculation \begin{align*}
\int \frac{|\nabla u|^2}{2} dx &=\int_{\{u>0\}} \frac{|\nabla u|^2}{2} dx\\ &=\int_{\{u>0\}} -u\Delta u/2 dx\\ &=\int_{\{u=\phi\}}-u\Delta udx\\ & \sim \int_{\{u=\phi\}}-\Delta udx\\&=\int_D -\Delta udx.
\end{align*}

As a result, we propose to study the following optimization problem:

\textbf{Physical Problem:} Find a minimizer of the Dirichlet energy $$\int \frac{|\nabla u|^2}{2} dx$$ over $K_0=\{u\in H^{1}_{0}(\R):u\ge \phi,  |\{u>0\}\backslash D|=m,  \Delta u\le 0 \text{ in } D, \Delta u=0 \text{ in } \{u>0\}\backslash D\}.$ 

Here the inequalities on $\Delta u$ are understood in the distributional sense.

Concerning the minimizer our main result is the existence and optimal regularity \begin{thm}
There exists a minimizer to the Physical Problem. This minimizer is Lipschitz continuous in $\R$.
\end{thm}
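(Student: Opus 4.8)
The plan is to prove existence by the direct method of the calculus of variations and then to establish the Lipschitz bound by combining obstacle-problem estimates inside $D$ with an Alt--Caffarelli-type linear growth estimate at the free boundary $\partial\{u>0\}$ outside $D$.

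\emph{Existence and compactness.} First I would check that $K_0$ is nonempty and has finite infimal energy by exhibiting an explicit competitor: the solution of the obstacle problem in $D$ with obstacle $\phi$, extended outside $D$ by a harmonic profile supported on an attached set of volume $m$ that vanishes on its outer boundary. Taking a minimizing sequence $u_k$, the energy bound gives a uniform $H^1$ bound, hence weak $H^1_0(\R)$ convergence and (along a subsequence) a.e.\ convergence to some $u$. The central difficulty is compactness: I must rule out that volume of $\{u_k>0\}\setminus D$ escapes to infinity, since a priori Fatou's lemma only yields $|\{u>0\}\setminus D|\le m$. The useful structural observation is that a connected component of $\{u_k>0\}\setminus\overline D$ on whose entire boundary $u_k$ vanishes must be trivial, because $u_k$ is harmonic there with zero boundary data; hence every positivity component outside $D$ must attach to $\partial D$. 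Combined with interior estimates for harmonic functions (giving local $L^\infty$ bounds from the energy bound) and an isoperimetric/Poincar\'e argument — or, failing that, a translation argument that slides escaping pieces back toward $D$ without increasing energy — this confines the relevant mass to a fixed ball. I expect this no-escape-of-volume step to be the main obstacle for existence, as it is exactly what forces the volume constraint to survive with equality in the limit.

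\emph{Passing to the limit.} Granting compactness, the Dirichlet energy is weakly lower semicontinuous, so $E(u)\le\liminf E(u_k)$, and it remains to verify $u\in K_0$. The constraint $u\ge\phi$ passes to the limit by a.e.\ convergence, and $\Delta u\le 0$ in $D$ passes to the limit because it is a closed constraint linear in $u$. The delicate constraint $\Delta u=0$ in $\{u>0\}\setminus D$ depends on the moving set; I would recover it by establishing local uniform convergence of $u_k$ on the open set $\{u>0\}\setminus\overline D$. At a point where $u>0$ one has $u_k>0$ nearby for large $k$, so each $u_k$ is harmonic there, and elliptic estimates transfer harmonicity to $u$. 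Together with the volume equality coming from compactness, this shows $u\in K_0$, so $u$ is a minimizer.

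\emph{Lipschitz regularity.} For regularity I argue locally in three regions. Inside $D$, a competitor argument shows $u$ coincides with the solution of the obstacle problem with obstacle $\phi$ and its own boundary data, so $u\in C^{1,1}_{\mathrm{loc}}(D)$ and is in particular Lipschitz; near the smooth boundary $\partial D$, matching the harmonic and obstacle solutions keeps the bound Lipschitz. In the open positivity set outside $\overline D$, $u$ is harmonic and hence smooth. The remaining and hardest point is the \emph{optimal} (Lipschitz, not better) regularity across the free boundary $\partial\{u>0\}$ outside $D$, which amounts to the linear growth estimate $u(x)\le C\,\mathrm{dist}(x,\{u=0\})$.

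\emph{The crux.} The main obstacle for this last estimate is that the volume condition $|\{u>0\}\setminus D|=m$ is a nonlocal, global constraint, so one cannot freely perturb the measure of the positivity set near a free boundary point as in the penalized Alt--Caffarelli functional. I would circumvent this with a Lagrange-multiplier/comparison scheme: show that a local change of volume near a free boundary point can be compensated by an equal and opposite volume change elsewhere at controlled energy cost, so that $u$ behaves as a local minimizer of the penalized functional $\int |\nabla u|^2/2+\Lambda\,|\{u>0\}\setminus D|$ for a suitable constant $\Lambda$. With such an effective penalization in hand, the classical Alt--Caffarelli argument — comparing $u$ in a small ball with its harmonic replacement and using the penalization to bound the measure of the positivity set where $u$ is large — yields the upper linear growth, and hence the interior Lipschitz estimate at the free boundary. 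Patching the three regions then gives Lipschitz continuity of $u$ on all of $\R$.
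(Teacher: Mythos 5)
Your route---direct method on $K_0$ followed by a Lagrange-multiplier reduction to a penalized Alt--Caffarelli functional---is genuinely different from the paper's, but both of its main steps have gaps, and they are precisely the difficulties that the paper's construction is designed to avoid. First, the compactness step: the class $K_0$ is not closed under weak $H^1$ limits, and the failure is not only escape of volume to infinity. Along a minimizing sequence the positivity set can simply collapse locally: a.e.\ convergence only gives $\{u>0\}\subset\liminf\{u_k>0\}$ up to null sets, hence $|\{u>0\}\setminus D|\le m$, possibly strictly, even if everything stays in a fixed ball. Your remedies (positivity components must attach to $\partial D$, translation/isoperimetric arguments) address escape but not collapse; ruling out collapse requires a nondegeneracy estimate (linear growth of $u_k$ away from its free boundary), which is a property of minimizers of penalized problems --- compare Lemma 5.4 of the paper --- not of an arbitrary minimizing sequence in $K_0$, and repairing a volume-deficient limit to restore the \emph{equality} constraint is not straightforward. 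Likewise, your transfer of the harmonicity constraint to the limit is circular: to apply elliptic estimates to $u_k$ near a point where $u>0$ you need $u_k>0$ on a fixed neighborhood, which requires equicontinuity of the $u_k$ --- exactly the a priori regularity you do not yet have.

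Second, and more fundamentally, the ``crux'' step fails as stated: minimality of $u$ within $K_0$ cannot be upgraded to local minimality of $w\mapsto\int\frac{|\nabla w|^2}{2}+\Lambda|\{w>0\}\setminus D|$ over all functions above $\phi$, because the competitors needed in the Alt--Caffarelli scheme (harmonic replacement in a ball, local perturbations of the positivity set, and your compensating volume change elsewhere) leave $K_0$: across the boundary of the perturbation region the normal derivative of the competitor jumps, so its distributional Laplacian acquires a signed surface measure and the constraints $\Delta v=0$ in $\{v>0\}\setminus D$ or $\Delta v\le0$ in $D$ are violated. The constraint class is so rigid that minimality over it yields almost no usable variational information; this is the ``sign-changing $\Delta u$'' obstruction the paper flags in its introduction. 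The paper resolves both gaps at once by never working in $K_0$ directly: it minimizes the three-parameter penalized functionals $J_{\kappa_1,\kappa_2,\epsilon}$ over all of $H^1_0(\R)$ (where every competitor is admissible and Euler--Lagrange equations hold), proves gradient bounds uniform in $\kappa_1$ (Theorem 3.2) and then in $\kappa_2$ (Theorem 4.6), passes to limits, and only at the end (Section 6, invoking Theorem 7 of Aguilera--Alt--Caffarelli) shows that for small fixed $\epsilon$ the limit satisfies $|\{u>0\}\setminus D|=m$ and the sign conditions on $\Delta u$, hence lies in $K_0$ and minimizes the Dirichlet energy there. Existence and the Lipschitz bound are thereby obtained simultaneously for the constructed minimizer, rather than regularity being proved for an abstractly obtained one. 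If you want to salvage your outline, the volume-compensation idea must be implemented at the level of approximating unconstrained problems (where it is essentially the role of $f_\epsilon$), not on $K_0$ itself.
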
 

There are two free boundaries coming from the interior contact set and the exterior boundary  $\partial\{u>0\}$.
These correspond to the boundary of effective heating sources and the boundary of insulation material, respectively. Concerning the regularity of the interior free boundary we establish

\begin{thm}
For  $\Delta \phi$ uniformly negative in $\{\phi>0\}$, the interior free boundary $\partial (\{u>\phi\}\cap D)$ is smooth except on a set of singular points, which are covered by a countable union of lower-dimensional $C^1$ manifolds.
\end{thm}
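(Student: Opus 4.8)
The plan is to reduce the statement to the regularity theory of the classical obstacle problem, the conceptually central step being to identify $u|_D$ with an obstacle-problem solution. Let $g:=u|_{\partial D}$ be the trace of the minimizer, and let $\tilde u$ be the solution of the obstacle problem in $D$ with obstacle $\phi$ and boundary data $g$. I would form the competitor $\hat u$ equal to $\tilde u$ in $D$ and to $u$ on $\R\setminus D$. Since $\tilde u$ and $u$ share the trace $g$, the glued function lies in $H^1_0(\R)$; since the constraints defining $K_0$ are imposed on the open sets $D$ and $\{u>0\}\setminus D$ (so that the interface $\partial D$ itself carries no constraint), and since $\tilde u\ge\phi$ and $\Delta\tilde u\le 0$ in $D$ while nothing changes outside $D$, the function $\hat u$ belongs to $K_0$. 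As $\tilde u$ minimizes the Dirichlet energy in $D$ among functions above $\phi$ with trace $g$, a class that contains $u|_D$, the energy of $\hat u$ does not exceed that of $u$; minimality of $u$ together with uniqueness for the obstacle problem then forces $u|_D=\tilde u$. In particular $u$ is $C^{1,1}_{\mathrm{loc}}$ in $D$ and harmonic in $\{u>\phi\}\cap D$.

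Next I would pass to the normalized obstacle problem. Setting $w:=u-\phi\ge 0$ in $D$, the previous step gives $w\in C^{1,1}_{\mathrm{loc}}$ and $\Delta w=f\,\chi_{\{w>0\}}$ in the distributional sense, with $f:=-\Delta\phi$. Because $u$ is superharmonic in $D$ and strictly positive somewhere (as $u\ge\phi$ and $\phi>0$ somewhere), the strong minimum principle gives $u>0$ throughout the interior of $D$; hence every free-boundary point interior to $D$ is a contact point with $u=\phi>0$, so the interior free boundary lies in $\{\phi>0\}$. There the hypothesis that $\Delta\phi$ is uniformly negative yields $f\ge c_0>0$, and $f$ is smooth since $\phi$ is. Thus $w$ solves the obstacle problem with a smooth, strictly positive right-hand side, for which the full free-boundary theory is available.

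The third step is to invoke and assemble that theory. Using the non-degeneracy $\sup_{B_r(x_0)}w\gtrsim r^2$ at free-boundary points together with the quadratic rescalings $w_r(x)=w(x_0+rx)/r^2$ and a Weiss-type monotonicity formula, every blow-up limit is a global solution that is either a half-space solution or a nonnegative quadratic polynomial $p(x)=\tfrac12 x^{T}Ax$ with $A\ge 0$ and $\mathrm{tr}\,A=f(x_0)$. This produces the dichotomy between regular points (half-space blow-up, contact set of positive density) and singular points (polynomial blow-up, contact set of vanishing density). At regular points Caffarelli's theorem gives that the free boundary is a $C^{1,\alpha}$ hypersurface, and a bootstrap using the smoothness of $f$ upgrades this to $C^\infty$; the regular set is relatively open in the free boundary. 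At singular points I would apply Monneau's monotonicity formula to obtain uniqueness and continuous dependence of the blow-up polynomial, and then Caffarelli's Whitney-extension argument to cover the singular set, stratified by $\dim\ker A$, by a countable union of $C^1$ manifolds of dimension at most $n-1$. Combining the two cases yields the asserted description of $\partial(\{u>\phi\}\cap D)$.

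The main obstacle I anticipate is the first step: one must check carefully that replacing $u|_D$ by the obstacle solution violates none of the constraints of $K_0$, in particular that the (unconstrained) jump of the normal derivative across $\partial D$ causes no difficulty and that the sign condition $\Delta\tilde u\le 0$ and the measure constraint are preserved. Once this identification with the classical obstacle problem is secured, the remainder is a direct application of the by-now standard regularity and stratification results.
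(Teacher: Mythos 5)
Your proposal is correct, and its backbone is the same as the paper's: identify $u|_D$ with the solution of the classical obstacle problem in $D$ with obstacle $\phi$ and boundary data $u|_{\partial D}$, then quote the standard regular/singular dichotomy and the covering of the singular set by countably many $C^1$ manifolds. The difference is in how that identification is justified. The paper gets it from Theorem 5.1: $u_\epsilon$ is a \emph{local minimizer} of $J_\epsilon(w)=\int\frac{|\nabla w|^2}{2}+f_\epsilon(|\{w>0\}\setminus D|)$ over the unconstrained class $\{w\ge\phi\}$, so any perturbation supported in $D$ and lying above $\phi$ leaves the volume term untouched, and $u$ immediately minimizes the Dirichlet energy over $K_1=\{w\in H^1(D):w\ge\phi,\ w=u\text{ on }\partial D\}$; no admissibility check is needed because the penalization scheme has already dissolved the PDE constraints of $K_0$. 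You instead work directly with the Physical Problem: you glue the obstacle solution $\tilde u$ in $D$ to $u$ outside, verify the glued function stays in $K_0$, and conclude $u|_D=\tilde u$ by strict convexity/uniqueness. This is valid, and you correctly isolate the one delicate point: the conditions $\Delta u\le 0$ in $D$ and $\Delta u=0$ in $\{u>0\}\setminus D$ are distributional statements on open sets, so the possible jump of $\partial_\nu u$ across $\partial D$ is unconstrained and the competitor is admissible. What each route buys: yours is self-contained at the level of the Physical Problem, using only the existence of a minimizer (Theorem 1.1) and none of the $\kappa_1,\kappa_2$ machinery, but it leans on the (natural) reading of the constraints in $K_0$; the paper's route applies to $u_\epsilon$ for every $\epsilon$, even before the Section 6 identification with the Physical Problem, and sidesteps the constraint class entirely. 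One point genuinely in your favor: your minimum-principle argument confining the contact set to $\{\phi>0\}$ — so that $f=-\Delta\phi\ge c_0>0$ and smooth near every interior free boundary point, which is what the nondegenerate obstacle theory requires — is a step the paper needs but leaves implicit.
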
 

Concerning the exterior free boundary we have \begin{thm}
The exterior free boundary $\partial \{u>0\}$ is smooth except on a $H^{n-1}$-null set.
\end{thm}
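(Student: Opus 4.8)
The plan is to show that, in a neighborhood of the exterior free boundary, the minimizer $u$ is a local minimizer of the one-phase Bernoulli (Alt--Caffarelli) functional, and then to invoke the established regularity theory for that problem. The first step is to localize. Since $\phi$ is compactly supported in $D$, the exterior free boundary $\partial\{u>0\}$ lies in $\R\setminus\overline{D}$, where both the obstacle constraint $u\ge\phi$ and the interior-heating condition $\Delta u\le 0$ are inactive. Thus in any ball $B\Subset\R\setminus\overline{D}$ meeting $\partial\{u>0\}$ the problem reduces to a clean one: $u\ge 0$, $u$ is harmonic in $\{u>0\}\cap B$ by the insulation condition, and $u$ competes only against the volume constraint $|\{u>0\}\setminus D|=m$, where perturbations supported in $B$ change the volume of the positivity set freely.

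The second step is to trade the volume constraint for a penalization. The standard device (as in Aguilera--Alt--Caffarelli and Alt--Caffarelli) is to exhibit a Lagrange multiplier $\lambda\ge 0$ such that $u$ locally minimizes $J(v)=\int_B |\nabla v|^2 + \lambda\,|\{v>0\}\cap B|$ among competitors agreeing with $u$ on $\partial B$. The mechanism: given a competitor $v$, one compares the Dirichlet energies directly, and then corrects the resulting change in the volume of $\{v>0\}$ by a small, fixed inward or outward perturbation of $u$ near a \emph{regular} portion of $\partial\{u>0\}$, whose energy cost is controlled linearly in the volume change with a definite rate; this produces $\lambda$. Verifying that this exchange is legitimate --- that a regular portion of the free boundary exists to absorb the correction, that the correction rate is uniform, and that the perturbed functions remain admissible for the original constraints (they are, being supported away from $D$ and from $\{u=\phi\}$) --- is, I expect, the main technical obstacle, since it is precisely here that the nonlocal volume constraint must be converted into a purely local minimality property.

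With local $J$-minimality in hand, the Alt--Caffarelli theory applies essentially verbatim on $B$. It yields nondegeneracy of $u$ together with uniform positive density of $\{u>0\}$ and of its complement along $\partial\{u>0\}$; that $\{u>0\}$ is a set of locally finite perimeter satisfying $\mathcal{H}^{n-1}\big(\partial\{u>0\}\setminus\partial^{*}\{u>0\}\big)=0$; and that the reduced boundary $\partial^{*}\{u>0\}$ is a $C^{1,\alpha}$ hypersurface. On this regular part $u$ is harmonic on one side and satisfies the Bernoulli condition $|\nabla u|=\sqrt{\lambda}$, so elliptic free-boundary regularity (Kinderlehrer--Nirenberg) bootstraps it to smoothness. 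The exceptional set is then exactly $\partial\{u>0\}\setminus\partial^{*}\{u>0\}$, which is $\mathcal{H}^{n-1}$-null, giving the stated conclusion. The sharper bound that the singular set has Hausdorff dimension at most $n-3$ (Caffarelli--Jerison--Kenig), improvable to $n-5$ by Jerison--Savin, would follow from the same framework but is not needed for the present statement.
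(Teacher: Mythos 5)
Your overall strategy---reduce to a local one-phase Bernoulli problem and invoke Alt--Caffarelli---is the right instinct, and your final step matches the paper's endgame (Theorems 5.6--5.8). But the step you yourself flag as ``the main technical obstacle'' is a genuine gap, and as described it is circular. To produce the Lagrange multiplier $\lambda$ you propose to absorb volume changes by perturbing $u$ near a \emph{regular} portion of $\partial\{u>0\}$; at that stage no regular portion is known to exist---its existence is precisely the theorem being proved. Nor can you first establish the measure-theoretic regularity and then build $\lambda$: for the hard volume constraint even the basic Alt--Caffarelli tools fail a priori, since truncations and harmonic replacements change $|\{v>0\}\setminus D|$ and are therefore inadmissible competitors, so non-degeneracy and density estimates are themselves hostage to the same volume-exchange argument. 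There is a second error: you assert your local perturbations ``remain admissible for the original constraints (they are, being supported away from $D$ and from $\{u=\phi\}$).'' This is false for the Physical Problem's class $K_0$, which also imposes $\Delta v=0$ in $\{v>0\}\setminus D$ on competitors; a truncation or volume correction of $u$ is generally not harmonic in its own positivity set, so minimality over $K_0$ is far too weak a hypothesis to run these comparisons. Finally, your localization claim that $\partial\{u>0\}$ avoids $\overline{D}$ is unjustified: the free boundary can touch $\partial D$; the paper only needs (and only proves estimates on) balls avoiding the interior contact set $D\cap\{u=\phi\}$, which is what actually stays away from $\partial\{u>0\}$.

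The paper's architecture is designed exactly to evade these issues, and in effect it hands you the multiplier rather than constructing it from regularity. The $u$ in this theorem is not a bare constrained minimizer but the limit $u_\epsilon$ of penalized minimizers, and Theorem 5.1 shows it is a \emph{local minimizer of the penalized functional} $J_\epsilon(w)=\int\frac{|\nabla w|^2}{2}+f_\epsilon(|\{w>0\}\setminus D|)$ over the unconstrained class $\{w\ge\phi\}$. Because $f_\epsilon$ is piecewise linear with slopes $\epsilon$ and $1/\epsilon$, every local competitor is admissible and costs/saves volume at a controlled linear rate, so the Alt--Caffarelli program runs verbatim away from the contact set: non-degeneracy with constant $c(n)\epsilon$ (Lemma 5.4), density estimates (Lemma 5.5), and the representation $\Delta u\big|_{(D\cap\{u=\phi\})^c}=q_u H^{n-1}\big|_{\partial\{u>0\}}$ with $0<c\le q_u\le C$ and half-plane blow-ups at $H^{n-1}$-a.e.\ point (Theorem 5.6). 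Only then is your ``multiplier'' statement proved: Theorem 5.7 shows $q_u$ is a.e.\ constant by the Aguilera--Alt--Caffarelli volume-exchange argument performed at a.e.\ flat point, which the structure theorem supplies \emph{without} any smoothness---this is what breaks the circularity. Smoothness off an $H^{n-1}$-null set then follows from the weak-solution theory of Alt--Caffarelli, and the identification with the hard constraint $|\{u>0\}\setminus D|=m$ is deferred to Section 6 (via Theorem 7 of Aguilera--Alt--Caffarelli, for $\epsilon$ small). To salvage your route you would essentially have to reinvent this penalization scheme.
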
 Here $H^{n-1}$ is the $(n-1)$-dimensional Hausdorff measure.

Similar problems have been studied by Alt-Caffarelli \cite{AC}, Aguilera-Alt-Caffarelli \cite{AAC}, Aguilera-Caffarelli-Spruck \cite{ACS} and Teixeira \cite{T}, where the authors studied various functionals that are of the same order of the Dirichlet energy. The results and techniques in this paper are very much inspired by these previous work. However there are also significant differences. 

On the physical level, instead of prescribing temperature along the walls of the room as in previous works, we consider a minimal temperature profile in the interior of the room. This changes the problem from a boundary value problem in $D^c$ to a problem in the entire $\R$. 

This leads to some new difficulties, the most fundamental one being the sign-changing $\Delta u$. In all previous works, $u$ is a subsolution throughout the domain of concern, which is the source of regularity of the minimizer. Here, however, $\Delta u$ changes signs. We get around this by studying a series of perturbed problems. These perturbed problems obtain very regular solutions that converge to a minimizer of our problem. Also Lipschitz regularity is persistent along this limiting process, which gives the optimal regularity of the minimizer.

Once the optimal regularity of the minimizer is established, the problem naturally splits into an interior obstacle problem and an exterior one-phase problem. This allows us to use previous results and hence establish the regularity of the two free boundaries.

This paper is organized as follows: in Section 2 we introduce a three-parameter-family of perturbed problems.  We show the existence and first properties of minimizers to these problems. In Section 3 we give estimates uniform in one of the parameters. This is exploited in Section 4 to obtain `asymptotic' minimizers of a two-parameter family of perturbed problems. Estimates uniform in one of remaining two parameters is also established in Section 4. This gives rise to limiting solutions to yet another family of perturbed problems, which now depend on one parameter.  In Section 5. we study the free boundary regularity of these limiting solutions.   In the last section, Section 6, we connect this one-parameter perturbed problem to our original Physical Problem by showing that when the last parameter is small enough, a minimizer of this perturbed problem actually solves our Physical Problem. This completes the proof for main results as estimates on minimizers of the perturbed problems apply to a minimizer to the Physical Problem. We also show that the positive phases of these minimizers are well-localized in a bounded set, hence any local estimates is actually uniform over the domain, and that the optimization in $\R$ is the same as in a big but bounded set.

%%%%%%%%%%%%%%%%%%%%%%%%%%%%%%%%%%%%%%%%%%%%%%%%%%%%%%%%%%%%%%%%%%%%%%%%%%%%%%%%%%%%%%%%%%%%%%%%%%%%%%%%
\section{A three-parameter family of perturbed problems}
For small positive parameters $\kappa_1$, $\kappa_2$ and $\epsilon$, we define the following functions:\begin{itemize}\item$A_{\kappa_1}:\mathbb{R}\to\mathbb{R}$ is a \underline{nonnegative} \underline{decreasing} \underline{convex}  function that \underline{vanishes on $[0,+\infty)$}. It equals $-\frac{1}{\kappa_1}(t-\frac{\kappa_1}{2})$ for $t<-\kappa_1$, and smoothly interpolates between $-\kappa_1$ and $0$.

$\alpha_{\kappa_1}$ is the derivative of $A_{\kappa_1}$.
\item $B_{\kappa_2}:\mathbb{R}\to\mathbb{R}$ is a \underline{piecewise linear} function that \underline{vanishes on $(-\infty,0]$ }  and  \underline{ equals $1$ on $[\kappa_2,+\infty)$}.

$\beta_{\kappa_2}$ is the derivative of $B_{\kappa_2}$

\item $f_\epsilon:\mathbb{R}\to\mathbb{R}$ is the \underline{piecewise linear} function that \underline{equals $0$ at $m$}, has \underline{slope $\frac{1}{\epsilon}$ to the right of $m$}, and \underline{slope $\epsilon$ to the left of $m$}. 
\end{itemize} We study the following three-parameter functional \begin{equation}
J_{\kappa_1,\kappa_2,\epsilon}(w)=\int\frac{|\nabla w|^2}{2}dx+A_{\kappa_1}(w-\phi)+f_{\epsilon}(\int_{D^c}B_{\kappa_2}(u))
\end{equation} over all $H^1_0(\R)$ functions.

\begin{rem}
We enlarge the class of functions under consideration from $K_0$ to all $H^1_0(B_R)$ functions. To obtain solutions to our original problem we impose three-parameter penalization/ regularization. $A_{\kappa_1}$ is to penalize functions that do not lie above $\phi$, which seems a standard technique in the study of obstacle-type problems \cite{PSU}. $B_{\kappa_2}$ is to regularize  $u\mapsto |\{u>0\}\backslash D|$ as in Caffarelli-Salsa \cite{CS}. $f_\epsilon$ is to penalize  functions with the wrong volume of positive phase \cite{AAC}.
\end{rem} 

%\begin{rem}
%Here the restriction to $B_R$ is to ensure convergence of certain integrals and to prevent `thin and long' spikes of positive phase. It has no effect on our problem since later it can be shown that all minimizers concentrate inside a bounded domain.
%\end{rem}
\begin{rem}
We would often suppress subscripts when there is no ambiguity.
\end{rem}

The following gives a competitor that may be far from optimal, but it is universal in the sense that it gives estimates independent of all parameters.
\begin{prop}
There is $M=M(\phi, D)$ and $w\in K_0$ such that $$J_{\kappa_1,\kappa_2,\epsilon}(w)=M$$ for all positive $\kappa_1,\kappa_2$ and $\epsilon$.
\end{prop}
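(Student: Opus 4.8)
The plan is to exhibit a single $w\in K_0$ for which all three penalty/regularization terms collapse to parameter-independent values, so that $J_{\kappa_1,\kappa_2,\epsilon}(w)$ reduces to the bare Dirichlet energy $\int|\nabla w|^2/2$; this number depends only on the data ($\phi$, $D$, and the fixed mass $m$), and we take it to be $M$. The guiding observation is that $A_{\kappa_1}$, $B_{\kappa_2}$ and $f_\epsilon$ are designed precisely to penalize violations of the two hard constraints defining $K_0$, namely $w\ge\phi$ and $|\{w>0\}\setminus D|=m$. A competitor that already satisfies these \emph{exactly} should therefore see every penalty switch off, independently of $\kappa_1,\kappa_2,\epsilon$, leaving only the energy, which does not see the parameters at all.

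First I would build such a competitor concretely. In $D$, let $w$ solve the obstacle problem with obstacle $\phi$ and a fixed positive boundary datum $a$ on $\partial D$; this yields $w\ge\phi$ together with $\Delta w\le 0$ in $D$, both required for membership in $K_0$. Outside $D$, fix a bounded open region $\Omega\subset D^c$ adjacent to $\partial D$ with $|\Omega|=m$, let $w$ be harmonic on $\Omega$ with the matching datum inherited from $\partial D$, and set $w\equiv 0$ on the remainder of $D^c$. By the maximum principle $w>0$ throughout $\Omega$ while $\Delta w=0$ in $\{w>0\}\cap D^c$, so $|\{w>0\}\setminus D|=|\Omega|=m$. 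After checking continuity across $\partial D$ and compact support, $w\in H^1_0(\R)\cap K_0$, and $M:=\int|\nabla w|^2/2$ is a fixed finite number determined by the data.

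It then remains to verify that the two penalty terms vanish for every choice of parameters. Since $w\ge\phi$ gives $w-\phi\ge 0$ and $A_{\kappa_1}$ vanishes on $[0,+\infty)$, one has $\int A_{\kappa_1}(w-\phi)\,dx=0$ for all $\kappa_1$, which is immediate and parameter-free. The genuinely delicate point, and the step I expect to be the main obstacle, is the volume term $f_\epsilon(\int_{D^c}B_{\kappa_2}(w))$: because $f_\epsilon(m)=0$ for every $\epsilon$, it suffices to arrange $\int_{D^c}B_{\kappa_2}(w)=m$, i.e. $B_{\kappa_2}(w)=1$ almost everywhere on the exterior positive phase. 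Since $B_{\kappa_2}$ only attains the value $1$ once its argument exceeds $\kappa_2$, I would refine the construction so that $w$ is bounded below by a positive constant on $\Omega$ — taking $\Omega$ of plateau type, on whose relevant boundary the datum equals a fixed $a>0$, so that $w\equiv a$ there — which makes the transition layer $\{0<w<\kappa_2\}\cap D^c$ null and forces $\int_{D^c}B_{\kappa_2}(w)=|\Omega|=m$, whence $f_\epsilon$ vanishes. Combining the three facts gives $J_{\kappa_1,\kappa_2,\epsilon}(w)=\int|\nabla w|^2/2=M$ for all positive $\kappa_1,\kappa_2,\epsilon$. Keeping the exterior phase away from the transition region of $B_{\kappa_2}$ — equivalently, realizing the mass $m$ through a value bounded away from $0$ rather than a profile decaying continuously to $0$ across $\partial\{w>0\}$ — is exactly where the construction must be made robust, and it is the crux of the argument.
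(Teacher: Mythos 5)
There is a genuine gap, and it sits exactly at the step you yourself identified as the crux. Your plateau competitor --- $w\ge a>0$ on $\Omega$ and $w\equiv 0$ on the remainder of $D^c$ --- cannot belong to $H^1_0(\R)$: setting $v=\min(w,a)$, the function $v$ agrees a.e.\ on the open exterior of $\overline{D}$ with $a\chi_\Omega$, whose distributional gradient has a singular part supported on the measure-theoretic boundary of $\Omega$ away from $\overline{D}$; for $v\in H^1$ that exterior perimeter would have to vanish, which is impossible when $0<|\Omega|=m<\infty$ (except in the non-generic case where $\Omega$ can be chosen as a union of bounded components of $(\overline{D})^c$ with total measure exactly $m$). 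Equivalently, a Sobolev function cannot jump across $\partial\Omega$, so the transition layer $\{0<w<\kappa_2\}\cap D^c$ necessarily has positive measure; hence $\int_{D^c}B_{\kappa_2}(w)<m$ strictly, the $f_\epsilon$ term does not switch off, and your $J_{\kappa_1,\kappa_2,\epsilon}(w)$ depends on $\kappa_2$ and $\epsilon$ after all. Your first, harmonic version of the construction is admissible and does lie in $K_0$, but by the maximum principle it decays continuously to $0$ at the outer boundary of $\Omega$ --- precisely the behavior your ``refinement'' then tries, impermissibly, to remove.

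The paper sidesteps all of this by not realizing the exterior volume at all: its competitor is the Dirichlet minimizer over $H^1_0(D)$ functions above $\phi$ (the obstacle problem in $D$ with zero boundary data), extended by zero. Then $w\ge\phi$ kills the $A_{\kappa_1}$ term, and $\{w>0\}\setminus D=\emptyset$ makes $B_{\kappa_2}(w)\equiv 0$ on $D^c$, so the volume term is the constant $f_\epsilon(0)$; keeping the positivity set inside $\overline{D}$ is essentially the only way to decouple from $\kappa_2$, since any $H^1$ function with a nontrivial exterior positive phase must cross the transition zone of $B_{\kappa_2}$ on a set of positive measure. Two caveats for fairness: with the stated $f_\epsilon$ one has $f_\epsilon(0)=-\epsilon m\le 0$, so even the paper's equality is really the uniform bound $J(w)\le M$ (which is all that is used downstream, e.g.\ in Proposition 3.4), and the paper's $w$ has $|\{w>0\}\setminus D|=0\ne m$, so its claim $w\in K_0$ tacitly relaxes the volume constraint --- your instinct to honor $|\{w>0\}\setminus D|=m$ literally is part of what pushed you toward the impossible plateau. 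Your shell-type construction can be salvaged into the same inequality form, since $f_\epsilon\le 0$ to the left of $m$ gives $J(w)\le\int|\nabla w|^2/2$ uniformly in all parameters, but the exact parameter-independent equality through vanishing of all penalty terms is unattainable in general.
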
 

\begin{proof}
Let $w$ be the minimizer of the Dirichlet energy among all $H^1_0(D)$ functions above $\phi$. Then $A(w-\phi)$ is constantly $0$. $B_{\kappa_2}(w)$ vanishes outside $D$ thus $$J_{\kappa_1,\kappa_2,\epsilon}(w)=\int\frac{|\nabla w|^2}{2}dx=:M$$ independent of the parameters.

Obviously $w\ge \phi$. $\Delta w\le 0$ in $D$ as a standard result from obstacle problem. $\{w>0\}\backslash D=\emptyset$. Thus $w\in K_0$.
\end{proof} 

Next we establish the existence of minimizers to the perturbed problems.

\begin{prop}
There exist minimizers to the perturbed functionals.
\end{prop}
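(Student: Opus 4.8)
The plan is to use the direct method of the calculus of variations. I would fix the three parameters $\kappa_1,\kappa_2,\epsilon$ and consider a minimizing sequence $w_k\in H^1_0(\R)$ with $J_{\kappa_1,\kappa_2,\epsilon}(w_k)\to\inf J_{\kappa_1,\kappa_2,\epsilon}$. The preceding proposition gives a universal competitor of finite energy $M$, so the infimum is finite and we may assume $J(w_k)\le M+1$ for all $k$. The first step is to extract uniform bounds: since each of the three terms in the functional is bounded below (the Dirichlet term and $A_{\kappa_1}$ are nonnegative, and $f_\epsilon$ is bounded below because $B_{\kappa_2}$ is nonnegative so its integral is nonnegative and $f_\epsilon$ is increasing to the right and has only the mild slope $\epsilon$ to the left of $m$, bottoming out at $f_\epsilon(0)=-\epsilon m$), the energy bound forces $\int|\nabla w_k|^2$ to be uniformly bounded. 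Combined with the zero boundary condition on the fixed bounded domain, the Poincar\'e inequality gives a uniform $H^1_0$ bound on $w_k$.

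Next I would pass to a subsequence so that $w_k\rightharpoonup w$ weakly in $H^1_0(\R)$, and by Rellich--Kondrachov compactly in $L^2$, hence (after a further subsequence) pointwise almost everywhere. The goal is then to show $J_{\kappa_1,\kappa_2,\epsilon}$ is sequentially weakly lower semicontinuous, so that $J(w)\le\liminf_k J(w_k)=\inf J$, identifying $w$ as a minimizer. I would handle the three terms separately. The Dirichlet term $\int|\nabla w|^2/2$ is weakly lower semicontinuous by convexity, the standard fact underlying the direct method. For the penalization term, since $A_{\kappa_1}$ is continuous (indeed Lipschitz) and the argument $w_k-\phi\to w-\phi$ in $L^2$ and a.e., I would pass to the limit in $\int A_{\kappa_1}(w_k-\phi)\,dx$ using dominated or, more safely, Fatou-type convergence; here the convexity and nonnegativity of $A_{\kappa_1}$, together with the $L^2$ convergence, let me either get equality of the limit or at least the needed lower semicontinuity.

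For the third term I would argue that $B_{\kappa_2}$ is Lipschitz and bounded (it lives in $[0,1]$), so $B_{\kappa_2}(w_k)\to B_{\kappa_2}(w)$ in $L^1(D^c)$ by dominated convergence on the bounded ambient domain, whence the scalar $\int_{D^c}B_{\kappa_2}(w_k)\to\int_{D^c}B_{\kappa_2}(w)$; since $f_\epsilon$ is continuous this gives $f_\epsilon(\int_{D^c}B_{\kappa_2}(w_k))\to f_\epsilon(\int_{D^c}B_{\kappa_2}(w))$, so the third term converges outright rather than merely being lower semicontinuous. Assembling the three pieces yields $J(w)\le\liminf_k J(w_k)$, and since $w\in H^1_0(\R)$ is an admissible competitor, $w$ is a minimizer.

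The main obstacle I anticipate is the penalization term $\int A_{\kappa_1}(w-\phi)\,dx$ rather than the other two: one must confirm that weak $H^1$ convergence is compatible with passing to the limit in a nonlinear composition. The convexity of $A_{\kappa_1}$ is the key structural feature that rescues this, since convex integrands are weakly lower semicontinuous, but I would need to make sure the composition $w\mapsto\int A_{\kappa_1}(w-\phi)$ is genuinely lower semicontinuous under the $L^2$ convergence I have, rather than only formally; the cleanest route is to invoke pointwise a.e. convergence along the subsequence and Fatou's lemma, using that $A_{\kappa_1}\ge 0$. A secondary technical point is ensuring all integrals are over a fixed bounded set so that the $L^p$ compactness and dominated convergence arguments are legitimate; the remark following the functional indicates the ambient space is effectively $H^1_0(B_R)$ for some large ball, which I would use throughout.
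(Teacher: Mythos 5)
Your proposal is correct and is exactly the ``standard argument using the direct method'' that the paper invokes (citing Alt--Caffarelli) without spelling out: finite infimum from the universal competitor, uniform $H^1_0$ bounds since all three terms are bounded below, weak/Rellich compactness on the bounded ambient domain $B_R$, weak lower semicontinuity of the Dirichlet term, Fatou for the $A_{\kappa_1}$ term, and strong convergence of the volume term composed with the continuous $f_\epsilon$. Your handling of the ambient-domain issue via the paper's remark (working in $H^1_0(B_R)$, so that Poincar\'e and dominated convergence apply) matches the paper's intent, so there is no gap.
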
 

\begin{proof}
The function in Proposition 2.3 is a competitor and shows the functional is not always infinite. 

Then the existence follows from a standard argument using the direct method \cite{AC}.
\end{proof} 

The next proposition gives an $\mathcal{L}^{\infty}$ estimate on minimizers, and shows we only need to consider functions with one phase.
\begin{prop}
If $u_{\kappa_1,\kappa_2,\epsilon}$ is a mimimizer to $J_{\kappa_1,\kappa_2,\epsilon}$, then $$0\le u_{\kappa_1,\kappa_2,\epsilon}\le \max{\phi}.$$
\end{prop}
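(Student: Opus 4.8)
The plan is to obtain both bounds by truncation, exploiting the monotonicity properties of $A_{\kappa_1}$, $B_{\kappa_2}$ and $f_\epsilon$. For the lower bound I would compare the minimizer $u$ (I suppress the subscripts) with its positive part $u^+=\max(u,0)$, and for the upper bound with the truncation $v=\min(u,\max\phi)$. In each case the strategy is to show that the truncation does not increase any of the three terms of $J_{\kappa_1,\kappa_2,\epsilon}$, so that by minimality the functional is unchanged; then equality in the Dirichlet term forces the gradient of the truncated-off part to vanish, and a trace argument finishes.

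For the lower bound, I would argue term by term. Since $\nabla u^+=\mathbf{1}_{\{u>0\}}\nabla u$, the Dirichlet energy satisfies $\int\frac{|\nabla u^+|^2}{2}\,dx\le\int\frac{|\nabla u|^2}{2}\,dx$. On $\{u<0\}$ one has $u-\phi<-\phi=u^+-\phi$ because $\phi\ge 0$; as $A_{\kappa_1}$ is decreasing this gives $A_{\kappa_1}(u^+-\phi)\le A_{\kappa_1}(u-\phi)$ pointwise, so the $A_{\kappa_1}$ term does not increase. Finally, since $B_{\kappa_2}$ vanishes on $(-\infty,0]$ and $u^+=u$ on $\{u\ge 0\}$, one has $B_{\kappa_2}(u^+)=B_{\kappa_2}(u)$ everywhere, so the argument of $f_\epsilon$, and hence the last term, is unchanged. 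Thus $J(u^+)\le J(u)$.

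For the upper bound, set $M_0=\max\phi$ and $v=\min(u,M_0)$. Again $\nabla v=\mathbf{1}_{\{u<M_0\}}\nabla u$ controls the Dirichlet term. On $\{u>M_0\}$ both $u-\phi$ and $v-\phi=M_0-\phi$ are nonnegative (here I use $\phi\le M_0$), so $A_{\kappa_1}(u-\phi)=A_{\kappa_1}(v-\phi)=0$ and the $A_{\kappa_1}$ term is unchanged. Since $B_{\kappa_2}$ is nondecreasing and $v\le u$, one has $\int_{D^c}B_{\kappa_2}(v)\le\int_{D^c}B_{\kappa_2}(u)$; because $f_\epsilon$ is increasing (both of its slopes are positive), the last term does not increase. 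Hence $J(v)\le J(u)$. The handling of this nonlocal term is the one delicate point: one cannot compare $f_\epsilon$ pointwise, and must instead track the single real argument $\int_{D^c}B_{\kappa_2}(\cdot)$ and appeal to monotonicity of $f_\epsilon$, which is exactly why the sign of its two slopes matters.

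To conclude, minimality of $u$ gives $J(u)\le J(u^+)$ and $J(u)\le J(v)$, so in both comparisons equality holds, and since each of the three terms moved in the correct direction, each term is individually preserved. In particular the Dirichlet energies agree, which forces $\nabla u=0$ a.e.\ on $\{u<0\}$ and on $\{u>M_0\}$. Equivalently $u^-=u^+-u$ and $(u-M_0)^+=u-v$ have vanishing gradient a.e.; both lie in the underlying $H^1_0$ space and therefore have zero trace, so being (locally) constant they vanish identically. This yields $0\le u\le\max\phi$.
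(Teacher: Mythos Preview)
Your proof is correct. The paper's own proof takes a closely related but not identical route: it uses the one-parameter families of competitors $u-t\min(u,0)$ and $u-t(u-\max\phi)^+$ and studies the first-order behaviour of $J_{\kappa_1,\kappa_2,\epsilon}$ as $t\to 0^+$, i.e.\ a first-variation argument. You instead jump directly to the endpoint $t=1$ (the full truncations $u^+$ and $\min(u,\max\phi)$), compare term by term, and then read off the conclusion from equality in the Dirichlet part. Your version is arguably more elementary---no differentiation of the functional is needed, only the monotonicity of $A_{\kappa_1}$, $B_{\kappa_2}$ and $f_\epsilon$---and it makes the role of the sign of the slopes of $f_\epsilon$ very transparent, which you correctly flagged. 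The paper's first-variation approach, on the other hand, is more robust when the truncated competitor does not obviously lie in the admissible class or when strict monotonicity of some term is needed to close the argument; here neither issue arises, so both approaches work equally well.
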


\begin{proof}
Use $u_{\kappa_1,\kappa_2,\epsilon}-t\min(u,0)$ and $u_{\kappa_1,\kappa_2,\epsilon}-t\min(u-\max{\phi},0)$ as competitors and study the first order behavior as $t\to 0^{+}$.
\end{proof} 

\begin{prop}
If $u_{\kappa_1,\kappa_2,\epsilon}$ is a mimimizer to $J_{\kappa_1,\kappa_2,\epsilon}$, then \begin{equation}
\Delta u_{\kappa_1,\kappa_2,\epsilon}=\alpha_{\kappa_1}(u_{\kappa_1,\kappa_2,\epsilon}-\phi)+f'(\int_{D^c}B_{\kappa_2}(u_{\kappa_1,\kappa_2,\epsilon}))\beta_{\kappa_2}(u_{\kappa_1,\kappa_2,\epsilon})\chi_{D^c}.
\end{equation} 
\end{prop}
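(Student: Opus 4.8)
The plan is to obtain the equation as the Euler--Lagrange equation of $J:=J_{\kappa_1,\kappa_2,\epsilon}$, by computing the first variation at the minimizer $u:=u_{\kappa_1,\kappa_2,\epsilon}$ (suppressing subscripts as in Remark~2.4). Fix a test function $\varphi\in C^\infty_c(\R)$ and consider the scalar function $h(t)=J(u+t\varphi)$. Since $u$ is a minimizer, $h$ attains its minimum at $t=0$, so its (one-sided) derivatives there vanish or have the appropriate signs. I would differentiate the three terms separately. The Dirichlet term is quadratic in $t$, with $\frac{d}{dt}\big|_{t=0}\int\frac{|\nabla(u+t\varphi)|^2}{2}\,dx=\int\nabla u\cdot\nabla\varphi\,dx=-\langle\Delta u,\varphi\rangle$, the last equality being the definition of the distributional Laplacian.

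Next, since $A_{\kappa_1}$ is $C^{1,1}$ with derivative $\alpha_{\kappa_1}$ bounded by $\kappa_1^{-1}$, the difference quotients of $A_{\kappa_1}(u-\phi+t\varphi)$ converge pointwise to $\alpha_{\kappa_1}(u-\phi)\varphi$ and are dominated by $\kappa_1^{-1}|\varphi|\in L^1$; dominated convergence gives $\frac{d}{dt}\big|_{t=0}\int A_{\kappa_1}(u-\phi+t\varphi)\,dx=\int\alpha_{\kappa_1}(u-\phi)\varphi\,dx$. For the last term, write $G(t)=\int_{D^c}B_{\kappa_2}(u+t\varphi)\,dx$ and set $\lambda=f'_\epsilon(G(0))$. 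Formally the chain rule gives $\frac{d}{dt}\big|_{t=0}f_\epsilon(G(t))=\lambda\,G'(0)=\lambda\int_{D^c}\beta_{\kappa_2}(u)\varphi\,dx$, so that combining the three contributions and invoking the fundamental lemma of the calculus of variations would yield the stated distributional identity, with the constant $\lambda=f'_\epsilon(\int_{D^c}B_{\kappa_2}(u))$ acting as a Lagrange multiplier.

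I expect the main obstacle to lie precisely in this last term: $B_{\kappa_2}$ has corners at $0$ and $\kappa_2$ and $f_\epsilon$ has a corner at $m$, so neither $G$ nor $f_\epsilon\circ G$ is a priori differentiable at $t=0$. On the level sets $\{u=0\}$ and $\{u=\kappa_2\}$ the difference quotients of $B_{\kappa_2}(u+t\varphi)$ have distinct one-sided limits, and if $G(0)=m$ then $f_\epsilon$ only admits the one-sided slopes $\epsilon$ and $1/\epsilon$. To make the argument rigorous I would work with one-sided derivatives: minimality of $u$ gives $D^+h(0)\ge 0\ge D^-h(0)$ for every $\varphi$, and applying this to both $\varphi$ and $-\varphi$ produces a matching pair of comparisons. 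The difference quotients of the $A_{\kappa_1}$ and $B_{\kappa_2}$ terms are dominated by $\kappa_1^{-1}|\varphi|$ and $\kappa_2^{-1}|\varphi|$, so all one-sided limits pass under the integral by dominated convergence.

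At a.e. point where $B_{\kappa_2}$ is differentiable at $u$, that is off $\{u=0\}\cup\{u=\kappa_2\}$, and provided $G(0)\neq m$, the one-sided derivatives coincide and the comparisons collapse to $h'(0)=0$, giving the equation there. On the level sets $\{u=0\}$ and $\{u=\kappa_2\}$, where $u$ is a.e.\ constant and hence $\nabla u=0$ a.e., the variation is constrained and a standard argument shows the identity persists for the natural a.e.\ representative of $\beta_{\kappa_2}(u)$, so that the ambiguity of $\beta_{\kappa_2}$ at the corners does not affect the distributional equation. Finally, in the degenerate case $G(0)=m$ the two-sided comparison pins the multiplier $\lambda$ to a value in $[\epsilon,1/\epsilon]$, which is exactly the content of reading $f'_\epsilon(\int_{D^c}B_{\kappa_2}(u))$ as the selected slope. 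Assembling the three derivatives and quantifying over all $\varphi\in C^\infty_c(\R)$ then yields the claimed identity $\Delta u=\alpha_{\kappa_1}(u-\phi)+f'_\epsilon(\int_{D^c}B_{\kappa_2}(u))\,\beta_{\kappa_2}(u)\chi_{D^c}$ in the distributional sense.
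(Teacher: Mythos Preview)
Your proposal is correct and takes the same approach as the paper: both derive the equation as the Euler--Lagrange equation of $J_{\kappa_1,\kappa_2,\epsilon}$. The paper's proof is in fact a single sentence (``This is the Euler-Lagrange equation of the perturbed energy functional''), so your treatment of the corner cases for $B_{\kappa_2}$ and $f_\epsilon$ supplies detail the paper omits entirely.
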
 

\begin{proof}
This is the Euler-Lagrange equation of the perturbed energy functional.
\end{proof} 

\section{Sending $\kappa_1\to 0$}
In this section we give uniform $C^{1,\alpha}$-estimate of minimizers independent of $\kappa_1$, establishing compactness when $\kappa_1\to 0$. The limiting function lies above $\phi$ and asymptotically minimizes a two-parameter functional.

With standard regularity theory for elliptic equations, Proposition 2.6 gives $C^{1,\alpha}$ and $W^{2,p}$ regularity of the minimizers for any $0<\alpha<1$ and $1\le p<+\infty$. The goal, however, is to establish estimates independent of $\kappa_1$. This begins with an uniform estimates on $\|\alpha_{\kappa_1}(u-\phi)\|_{\infty}$.

\begin{prop}
$\|\alpha_{\kappa_1}(u-\phi)\|_{\infty}\le \|\phi\|_{C^{1,1}}+\frac{1}{\epsilon\kappa_2}.$
\end{prop}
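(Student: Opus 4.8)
The plan is to read the bound directly off the Euler--Lagrange equation of Proposition 2.6 via a maximum principle applied to $v:=u-\phi$, the essential point being that the resulting estimate is uniform in $\kappa_1$ (this is exactly what will furnish compactness as $\kappa_1\to 0$). First I would record the two elementary bounds on the ingredients of the right-hand side: from the definitions, $\alpha_{\kappa_1}=A_{\kappa_1}'$ takes values in $[-\tfrac1{\kappa_1},0]$ and is nondecreasing (since $A_{\kappa_1}$ is convex), while the volume term $h:=f'(\int_{D^c}B_{\kappa_2}(u))\,\beta_{\kappa_2}(u)\chi_{D^c}$ is nonnegative and satisfies $0\le h\le \tfrac1{\epsilon\kappa_2}$, because $0\le f'\le\tfrac1\epsilon$ and $0\le\beta_{\kappa_2}\le\tfrac1{\kappa_2}$. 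Since $\alpha_{\kappa_1}\le 0$, proving the proposition amounts to bounding $\alpha_{\kappa_1}(u-\phi)$ from below by $-(\|\phi\|_{C^{1,1}}+\tfrac1{\epsilon\kappa_2})$.

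The core of the argument is to inspect $v=u-\phi$ where it is most negative. Rewriting Proposition 2.6 gives $\Delta v=\alpha_{\kappa_1}(v)+h-\Delta\phi$. The penalty $\alpha_{\kappa_1}(v)$ is nonzero only where $u<\phi$; since $u\ge 0$ by Proposition 2.5, this forces $\phi>0$, i.e. one is inside $\operatorname{supp}\phi\subset\subset D$. At an interior minimum $x_0$ of $v$ one has $\Delta v(x_0)\ge 0$, whence
$$\alpha_{\kappa_1}(v(x_0))=\Delta v(x_0)-h(x_0)+\Delta\phi(x_0)\ge -\tfrac1{\epsilon\kappa_2}-\|\Delta\phi\|_\infty\ge -\Big(\|\phi\|_{C^{1,1}}+\tfrac1{\epsilon\kappa_2}\Big).$$
Because $\alpha_{\kappa_1}$ is nondecreasing and $v(x)\ge v(x_0)$ for all $x$, the value $\alpha_{\kappa_1}(v(x_0))$ is in fact the global minimum of $\alpha_{\kappa_1}(v)$, so the displayed inequality upgrades to the pointwise bound $\alpha_{\kappa_1}(u-\phi)\ge -(\|\phi\|_{C^{1,1}}+\tfrac1{\epsilon\kappa_2})$ everywhere, which is the claim.

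I would make the pointwise ``interior minimum'' step rigorous by replacing it with the weak minimum principle on a sublevel set, since $u$ is only $W^{2,p}$ (the source term is bounded but possibly discontinuous) and $\Delta v$ need not be defined pointwise. Setting $M_0:=\|\phi\|_{C^{1,1}}+\tfrac1{\epsilon\kappa_2}$, and disposing first of the trivial case $M_0\ge\tfrac1{\kappa_1}$ (where $\|\alpha_{\kappa_1}\|_\infty\le\tfrac1{\kappa_1}$ already suffices), I would argue by contradiction: on the open set $U:=\{\alpha_{\kappa_1}(u-\phi)<-M_0\}$ one has $u<\phi$, hence $h=0$ and $\Delta v=\alpha_{\kappa_1}(v)-\Delta\phi<-M_0+\|\Delta\phi\|_\infty\le 0$, so $v$ is superharmonic on $U$; monotonicity of $\alpha_{\kappa_1}$ identifies $U$ with a sublevel set $\{v<t_0\}$ on whose boundary $v\equiv t_0$, and the minimum principle then forces $v\ge t_0$ on $U$, contradicting $v<t_0$ unless $U=\emptyset$. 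The main obstacle is precisely this passage from the heuristic pointwise computation to a clean maximum-principle statement valid for the limited $W^{2,p}$ regularity; once that is set up, the monotonicity of $\alpha_{\kappa_1}$ together with the crude bound on $h$ make the estimate, and its independence of $\kappa_1$, immediate.
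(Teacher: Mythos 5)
Your proof is correct, but it takes a genuinely different route from the paper. The paper argues integrally: it tests the Euler--Lagrange equation of Proposition 2.6 with $\alpha_{\kappa_1}(\tilde u)^p$ for even $p$, uses the monotonicity and convexity of $A_{\kappa_1}$ (i.e.\ $\alpha_{\kappa_1}\le 0$, $\alpha_{\kappa_1}'\ge 0$) to discard the two good-signed terms, applies H\"older, and then sends $p\to\infty$ to upgrade the resulting $\mathcal{L}^{p+1}(D)$ bounds to an $\mathcal{L}^\infty$ bound; this works directly from the weak formulation and never needs a pointwise Laplacian. Your argument instead is a pointwise maximum-principle argument, made rigorous exactly where it should be: since $\Delta v$ is only defined a.e.\ for $v=u-\phi\in W^{2,p}$, you replace the ``interior minimum'' heuristic by the weak minimum principle on the sublevel set $U=\{\alpha_{\kappa_1}(v)<-M_0\}$, which the monotonicity and continuity of $\alpha_{\kappa_1}$ identify with $\{v<t_0\}$; the containments $U\subset\{u<\phi\}\subset\{\phi>0\}\subset D$ (using $u\ge 0$ from Proposition 2.5) guarantee both that $U$ is bounded and that the volume term $h$ vanishes there, so $v$ is superharmonic on $U$ and the contradiction goes through. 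Two remarks on the comparison. First, your route actually yields a slightly stronger estimate: because $h\equiv 0$ on $U$, the $\frac{1}{\epsilon\kappa_2}$ term never enters and you really prove $\|\alpha_{\kappa_1}(u-\phi)\|_\infty\le\|\Delta\phi\|_\infty$; the paper's proof could exploit the same disjointness of supports (it even notes that $\alpha_{\kappa_1}(\tilde u)$ is supported in $D$ while $h$ is supported in $D^c$) but keeps the cruder constant, and both constants serve equally well for the uniform-in-$\kappa_1$ compactness that follows. Second, what the paper's $L^p$ test-function method buys is robustness: it requires no $W^{2,p}$ theory, no sublevel-set topology, and would survive in settings where pointwise comparison arguments are unavailable, whereas your method is shorter and more transparent here. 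Both proofs ultimately rest on the same structural facts --- the sign and monotonicity of $\alpha_{\kappa_1}$ and the localization of the penalty inside $D$ --- so either is a legitimate proof of the proposition.
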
 

\begin{proof}
Define $\tilde{u}=u-\phi$, then the equation for $\tilde{u}$ is $$\Delta \tilde{u}=\alpha_{\kappa_1}(\tilde{u})+f'(\int_{D^c}B(u))\beta(u)\chi_{D^c}-\Delta\phi.$$

Since for each fixed $\kappa_1>0$ $\alpha_{\kappa_1}$ is a bounded smooth function, $\alpha(\tilde{u})^p$ can be used as a test function for this equation:

\begin{equation*}
0=\int \nabla\tilde{u}\cdot (p\alpha(\tilde{u})^{p-1}\alpha'(\tilde{u})\nabla\tilde{u})+\alpha(\tilde{u})^{p+1}+f'(\int_{D^c}B(u))\beta(u)\chi_{D^c}\alpha(\tilde{u})^p-\Delta\phi\alpha(\tilde{u})^p.
\end{equation*}
If we choose $p$ to be even, then the first two terms are negative due to monotonicity and convexity of $A$. Consequently one has \begin{align*}\int_D|\alpha(\tilde{u})|^{p+1}&\le \int_D f'(\int_{D^c}B(u))\beta(u)\chi_{D^c}\alpha(\tilde{u})^p-\Delta\phi\alpha(\tilde{u})^p\\&\le (\int_D |f'(\int_{D^c}B(u))\beta(u)+\Delta\phi|^p)^{1/p}(\int_D|\alpha(\tilde{u})|^{p+1})^{\frac{p}{p+1}}.\end{align*} Note that we used the fact that $\alpha(\tilde{u})$ is supported in $D$ since $u\ge\phi$ outside $D$. 

As a result, $\|\alpha(\tilde{u})\|_{\mathcal{L}^{p+1}(D)}\le(\|\phi\|_{C^{1,1}}+\frac{1}{\epsilon\kappa_2})|D|^{1/p}.$ Normalizing the Lebesgue measure gives

$$(\int_D|\alpha(\tilde{u})|^{p+1}\frac{dx}{|D|})^{\frac{1}{p+1}}\le (\|\phi\|_{C^{1,1}}+\frac{1}{\epsilon\kappa_2})|D|^{\frac{1}{p}-\frac{1}{p+1}}.$$

$p\to +\infty$ gives the desired estimate.

\end{proof}

The proposition above says the right-hand side of (2.2) is bounded independent of $\kappa_1$, which gives uniform $C^{1,\alpha}$-estimate of minimizers.

\begin{thm}
Let $u_{\kappa_1,\kappa_2,\epsilon}$ be a minimizer of $J_{\kappa_1,\kappa_2,\epsilon}$, then for any compact set $K$ and $0<\alpha<1$ one has $$\|u_{\kappa_1,\kappa_2,\epsilon}\|_{C^{1,\alpha}(K)}\le C(K,\alpha, n)(\|\phi\|_{C^{1,1}}+\frac{1}{\epsilon\kappa_2}).$$
\end{thm}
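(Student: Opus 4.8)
The plan is to convert the uniform control on the right-hand side of the Euler--Lagrange equation (2.2) into a uniform $C^{1,\alpha}$ bound by standard interior elliptic theory. The only place where $\kappa_1$ enters the right-hand side is through the forcing term $\alpha_{\kappa_1}(u-\phi)$, and Proposition 3.1 has already bounded that term by $\|\phi\|_{C^{1,1}}+\frac{1}{\epsilon\kappa_2}$ uniformly in $\kappa_1$. So most of the genuine work is done, and the remaining task is essentially to bound the rest of the right-hand side and then quote the standard estimates.

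First I would bound the full right-hand side of (2.2) in $L^\infty$, independently of $\kappa_1$. The first term is controlled by Proposition 3.1. For the second term, $f_\epsilon$ is piecewise linear with slopes $\epsilon$ and $\frac{1}{\epsilon}$, so $|f'|\le\frac{1}{\epsilon}$, while $\beta_{\kappa_2}=B_{\kappa_2}'$ takes values in $[0,\frac{1}{\kappa_2}]$; hence that term is at most $\frac{1}{\epsilon\kappa_2}$. Combining the two contributions gives $\|\Delta u_{\kappa_1,\kappa_2,\epsilon}\|_{L^\infty}\le C(\|\phi\|_{C^{1,1}}+\frac{1}{\epsilon\kappa_2})$ with a constant independent of $\kappa_1$.

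Next, Proposition 2.5 supplies the pointwise bound $0\le u_{\kappa_1,\kappa_2,\epsilon}\le\max\phi$, which is independent of all three parameters. With an $L^\infty$-bounded right-hand side and an $L^\infty$-bounded solution, interior Calder\'on--Zygmund estimates yield $\|u_{\kappa_1,\kappa_2,\epsilon}\|_{W^{2,p}(K)}\le C(K,p,n)(\|\Delta u_{\kappa_1,\kappa_2,\epsilon}\|_{L^\infty}+\|u_{\kappa_1,\kappa_2,\epsilon}\|_{L^\infty})$ for every $p<\infty$. Given a target $\alpha\in(0,1)$, I would choose $p$ large enough that the Sobolev embedding $W^{2,p}\hookrightarrow C^{1,\alpha}$ holds (that is, $p>\frac{n}{1-\alpha}$), which delivers the asserted inequality $\|u_{\kappa_1,\kappa_2,\epsilon}\|_{C^{1,\alpha}(K)}\le C(K,\alpha,n)(\|\phi\|_{C^{1,1}}+\frac{1}{\epsilon\kappa_2})$.

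Since Proposition 3.1 carries the difficult estimate, I do not expect a serious obstacle here; the one thing to watch throughout is that every constant produced is genuinely independent of $\kappa_1$, which is automatic because neither the $L^\infty$ bound on $\Delta u$ nor the elliptic estimates involve $\kappa_1$ once the forcing term is controlled. If one also wants the bound on compact sets meeting the boundary of the ambient domain, the identical argument goes through using boundary $W^{2,p}$ estimates together with the zero boundary data and the smoothness of the domain.
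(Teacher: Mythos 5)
Your proposal is correct and is essentially the paper's argument: the paper proves Theorem 3.2 in a single sentence by observing that Proposition 3.1 makes the right-hand side of (2.2) bounded independently of $\kappa_1$ (the penalization term being trivially bounded by $\frac{1}{\epsilon\kappa_2}$ since $|f_\epsilon'|\le\frac{1}{\epsilon}$ and $0\le\beta_{\kappa_2}\le\frac{1}{\kappa_2}$), after which standard elliptic regularity gives the uniform $C^{1,\alpha}$ bound. Your write-up simply fills in the standard details (the $L^\infty$ bound from Proposition 2.5, interior Calder\'on--Zygmund estimates, and the embedding $W^{2,p}\hookrightarrow C^{1,\alpha}$ for $p>\frac{n}{1-\alpha}$), all of which is exactly what the paper leaves implicit.
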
 

The following is a direct consequence via Arzela-Ascoli:
\begin{cor}
Up to a subsequence $\kappa_1\to 0$, $u_{\kappa_1}$ converges to some $u$ weakly in $H^{1}_{0}(\R)$ and locally uniformly in $C^{1,\alpha}(\R)$.\end{cor}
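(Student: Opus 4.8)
The plan is to combine the uniform interior estimate of Theorem 3.2 with a standard extraction argument, keeping the two asserted modes of convergence --- weak $H^1_0$ and local $C^{1,\alpha}$ --- logically separate, since the former is not directly supplied by Theorem 3.2.

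First I would record a uniform $H^1_0$ bound, which is the one ingredient not contained in Theorem 3.2. Because each $u_{\kappa_1}$ minimizes $J_{\kappa_1,\kappa_2,\epsilon}$ and the competitor $w$ of Proposition 2.3 satisfies $J_{\kappa_1,\kappa_2,\epsilon}(w)=M$ independently of the parameters, minimality gives $J_{\kappa_1,\kappa_2,\epsilon}(u_{\kappa_1})\le M$. Since $A_{\kappa_1}\ge 0$ and $f_\epsilon\ge 0$ (the latter being a convex piecewise-linear function vanishing at $m$), the two penalty terms are nonnegative, so $\int\frac{|\nabla u_{\kappa_1}|^2}{2}\,dx\le M$ uniformly in $\kappa_1$. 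Combined with the $L^\infty$ bound $0\le u_{\kappa_1}\le\max\phi$ of Proposition 2.5 (and the fact that everything takes place in a fixed bounded set), this yields $\sup_{\kappa_1}\|u_{\kappa_1}\|_{H^1_0(\R)}<\infty$. By reflexivity of $H^1_0$, a subsequence converges weakly in $H^1_0(\R)$ to some limit $u$.

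Next, for the local $C^{1,\alpha}$ convergence I would exhaust $\R$ by balls $B_j$ and apply Theorem 3.2 on each $\overline{B_j}$. With $\kappa_2$ and $\epsilon$ held fixed, the bound $C(\overline{B_j},\alpha,n)(\|\phi\|_{C^{1,1}}+\frac{1}{\epsilon\kappa_2})$ is independent of $\kappa_1$, so $\{u_{\kappa_1}\}$ is equibounded in $C^{1,\alpha}(\overline{B_j})$; in particular the functions and their gradients are uniformly bounded and equicontinuous. Arzel\`a--Ascoli then extracts a subsequence converging in $C^1(\overline{B_j})$, and a diagonal argument over $j$ produces a single subsequence converging in $C^1_{\mathrm{loc}}(\R)$. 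Passing the $C^{0,\alpha}$ seminorm of the gradients to the limit shows the limit lies in $C^{1,\alpha}_{\mathrm{loc}}(\R)$ with the same constant, which is the sense in which the convergence is locally uniform in $C^{1,\alpha}$. Finally, the $C^1_{\mathrm{loc}}$ limit and the weak $H^1_0$ limit agree, since both coincide as distributions, so the two limits may be relabeled as the common function $u$.

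The statement is genuinely a corollary, so there is no serious obstacle; the only point deserving care is the precise reading of $C^{1,\alpha}$. The uniform $C^{1,\alpha}$ bound gives precompactness only in $C^{1,\beta}$ for $\beta<\alpha$, because the embedding $C^{1,\alpha}\hookrightarrow C^{1,\beta}$ is compact on bounded sets while $C^{1,\alpha}\hookrightarrow C^{1,\alpha}$ is not. Thus the honest convergence is in $C^{1,\beta}_{\mathrm{loc}}$ for every $\beta<\alpha$, with the limit nonetheless retaining the full $C^{1,\alpha}_{\mathrm{loc}}$ regularity by lower semicontinuity of the H\"older seminorm. Everything else follows mechanically once the uniform bounds of Theorem 3.2 and the energy bound $J_{\kappa_1,\kappa_2,\epsilon}\le M$ are in hand.
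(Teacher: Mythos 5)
Your argument is the same one the paper intends: the paper's entire proof of this corollary is the phrase ``direct consequence via Arzel\`a--Ascoli'' applied to Theorem 3.2, and your extraction, diagonalization, identification of the two limits, and careful reading of the $C^{1,\alpha}$ convergence (compactness really lands in $C^{1,\beta}_{\mathrm{loc}}$ for $\beta<\alpha$, or one applies Theorem 3.2 with a larger exponent $\alpha'>\alpha$ and uses the compact embedding) are exactly the details being left implicit there.

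One claim in your energy bound is false, though easily repaired: $f_\epsilon$ is \emph{not} nonnegative. By its definition it equals $\epsilon(t-m)$ for $t\le m$, so it is negative on $[0,m)$. What saves you is that the argument $\int_{D^c}B_{\kappa_2}(u_{\kappa_1})$ is always nonnegative and $f_\epsilon$ is increasing, so $f_\epsilon\bigl(\int_{D^c}B_{\kappa_2}(u_{\kappa_1})\bigr)\ge f_\epsilon(0)=-\epsilon m$. Minimality together with $A_{\kappa_1}\ge 0$ then gives
$$\int\frac{|\nabla u_{\kappa_1}|^2}{2}\,dx\le M-f_\epsilon\Bigl(\int_{D^c}B_{\kappa_2}(u_{\kappa_1})\Bigr)\le M+\epsilon m,$$
rather than $\le M$. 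Since $\epsilon$ and $m$ are fixed while $\kappa_1\to 0$, this is still a bound uniform in $\kappa_1$, and the rest of your weak-$H^1_0$ extraction goes through verbatim with $M$ replaced by $M+\epsilon m$.
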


With the uniform bound on energy as in Proposition 2.3, the limit $u$ lies above our obstacle:
\begin{prop}
$u\ge\phi$.
\end{prop}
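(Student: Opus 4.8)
The plan is to extract smallness of the penalization term from the universal energy bound and then upgrade it to a pointwise inequality using the local uniform convergence already in hand. First I would invoke the universal competitor $w\in K_0$ of Proposition 2.3: since $J_{\kappa_1,\kappa_2,\epsilon}(w)=M$ independently of the parameters, minimality gives $J_{\kappa_1,\kappa_2,\epsilon}(u_{\kappa_1})\le M$. Every term of the functional is nonnegative — the Dirichlet energy, the integral of the nonnegative function $A_{\kappa_1}(u_{\kappa_1}-\phi)$, and $f_\epsilon$ (a V-shape vanishing at $m$, hence $\ge 0$) — so in particular
\[
\int A_{\kappa_1}(u_{\kappa_1}-\phi)\,dx\le M .
\]

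Next I would exploit the explicit shape of $A_{\kappa_1}$. On the set $\{u_{\kappa_1}-\phi<-\kappa_1\}$ one has $A_{\kappa_1}(u_{\kappa_1}-\phi)=\frac{\phi-u_{\kappa_1}}{\kappa_1}+\frac12\ge\frac{\phi-u_{\kappa_1}}{\kappa_1}$, so the bound above yields
\[
\int_{\{u_{\kappa_1}<\phi-\kappa_1\}}(\phi-u_{\kappa_1})\,dx\le M\kappa_1 .
\]
Since $\phi$ is supported in $D$ and $u_{\kappa_1}\ge 0$ by Proposition 2.5, the negativity set $\{u_{\kappa_1}<\phi\}$ is contained in $D$; on the remaining thin strip $\{\phi-\kappa_1\le u_{\kappa_1}<\phi\}$ the integrand $\phi-u_{\kappa_1}$ is at most $\kappa_1$. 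Combining the two regions gives $\int_D(\phi-u_{\kappa_1})^+\,dx\le M\kappa_1+\kappa_1|D|\to 0$ as $\kappa_1\to 0$.

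Finally I would pass to the limit. By Corollary 3.3 the convergence $u_{\kappa_1}\to u$ is uniform on the compact set $\overline{D}$, so $(\phi-u_{\kappa_1})^+\to(\phi-u)^+$ uniformly there and the integrals converge, giving $\int_D(\phi-u)^+\,dx=0$. As $(\phi-u)^+$ is continuous and nonnegative it vanishes identically on $D$, i.e. $u\ge\phi$ on $D$; outside $D$ we have $u\ge 0=\phi$, so $u\ge\phi$ on all of $\R$. The one genuine subtlety, and where I expect the main care to be needed, is that the penalization bound controls $\phi-u_{\kappa_1}$ only after multiplication by the large factor $1/\kappa_1$ and only on $\{u_{\kappa_1}<\phi-\kappa_1\}$; turning this into $\mathcal{L}^1$-smallness of the full negative part $(\phi-u_{\kappa_1})^+$ relies on the localization $\{u_{\kappa_1}<\phi\}\subset D$ to keep the leftover strip of finite measure, and converting vanishing $\mathcal{L}^1$ mass into a pointwise inequality requires the uniform, rather than merely weak, convergence from Corollary 3.3.
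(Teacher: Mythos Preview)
Your argument is correct and follows essentially the same strategy as the paper: extract the bound $\int A_{\kappa_1}(u_{\kappa_1}-\phi)\le M$ from the universal competitor, use the explicit linear form of $A_{\kappa_1}$ on $\{u_{\kappa_1}-\phi<-\kappa_1\}$, and combine with the locally uniform convergence of Corollary~3.3. The only difference is cosmetic: the paper fixes $\delta>0$ and shows $|\{u_{\kappa_1}-\phi<-\delta/2\}\cap K|\le \tfrac{2M\kappa_1}{\delta}\to 0$ directly, whereas you push through an $\mathcal{L}^1$ estimate on $(\phi-u_{\kappa_1})^+$ and then pass to the limit; both routes use the same two ingredients. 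One small remark: with the paper's stated definition $f_\epsilon$ has slope $\epsilon$ to the \emph{left} of $m$, so it is not a V-shape and can be slightly negative (down to $-\epsilon m$ at $t=0$); your inequality then reads $\int A_{\kappa_1}(u_{\kappa_1}-\phi)\le M+\epsilon m$, which changes nothing, and indeed the paper's own proof writes the same bound $M\ge\int A_{\kappa_1}(u_{\kappa_1}-\phi)$.
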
 

\begin{proof}
For given $\delta>0$ and compact $K$, $\{u-\phi<-\delta\}\cap K$ is contained in $\{u_{\kappa_1}-\phi<-\delta/2\}\cap K$ for small $\kappa_1$.  For the latter set one has the following estimate
$$M\ge\int A_{\kappa_1}(u_{\kappa_1}-\phi)dx\ge \frac{1}{\kappa_1}\delta/2|\{u_{\kappa_1}-\phi<-\delta/2\}\cap K|.$$ Taking $\kappa_1\to 0$ forces $\{u_{\kappa_1}-\phi<-\delta/2\}\cap K$ to be null.
\end{proof}

%%%%%%%%%%%%%%%%%%%%%%%%%%%%%%%%%%%%%%%%%%%%%%%%%%%%
%%%%%%%%%%%%%%%%%%%%%%%%%%%%%%%%%%%%%%%%%%%%%%%%%%%%
\section{Sending $\kappa_2\to 0$}
Now we define a new two-parameter family of perturbed functionals that do not involve $\kappa_1$ anymore:\begin{equation}
J_{\kappa_2,\epsilon}(w)=\int\frac{|\nabla w|^2}{2}dx+f_{\epsilon}(\int_{D^c}B_{\kappa_2}(w)).
\end{equation} 

Ideally we would expect the limit $u$ from Corollary 3.3 to be a minimizer to this new functional over functions that lie above $\phi$. However this is not always true due to the lack of convexity in $u\mapsto f_{\epsilon}(\int_{D^c}B_{\kappa_2}(u)).$ Nevertheless we show that $u$ minimizes the energy `asymptotically' as in the next lemma. It is a variation of the classical lemma of Minty \cite{KS} applied to an operator with a monotone part $u\mapsto\Delta u$ and a regular part $ u\mapsto\beta(u)$.

\begin{lem}
Let $u$ be as in Corollary 3.3, then for any $v\in H^{1}_0(\R)$ with $v\ge\phi$, one has \begin{equation}
\frac{d}{d\lambda}\Big|_{\lambda=0^{+}}J_{\kappa_2,\epsilon}(u+\lambda(v-u))\ge 0.
\end{equation} 
\end{lem}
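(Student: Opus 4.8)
The plan is to read off the variational inequality from the Euler--Lagrange equation (2.2) for the approximating minimizers $u_{\kappa_1}$ and then pass to the limit $\kappa_1\to0$ supplied by Corollary 3.3, the key point being that the obstacle penalization has a sign that works in our favor and drops out. Fix $v\in H^1_0(\R)$ with $v\ge\phi$ and use $v-u_{\kappa_1}\in H^1_0(\R)$ as a test function in (2.2). Integrating by parts and rearranging gives
\begin{equation*}
\int\nabla u_{\kappa_1}\cdot\nabla(v-u_{\kappa_1})\,dx+f'\!\Big(\int_{D^c}B(u_{\kappa_1})\Big)\int_{D^c}\beta(u_{\kappa_1})(v-u_{\kappa_1})\,dx=-\int\alpha_{\kappa_1}(u_{\kappa_1}-\phi)(v-u_{\kappa_1})\,dx.
\end{equation*}
Because $A_{\kappa_1}$ is decreasing we have $\alpha_{\kappa_1}\le0$, and $\alpha_{\kappa_1}(u_{\kappa_1}-\phi)$ is supported where $u_{\kappa_1}<\phi$, on which $v-u_{\kappa_1}\ge\phi-u_{\kappa_1}>0$. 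Hence the right-hand side is nonnegative, so the left-hand side, whose two summands I call $a_{\kappa_1}$ and $b_{\kappa_1}$, satisfies
\begin{equation*}
a_{\kappa_1}+b_{\kappa_1}=\int\nabla u_{\kappa_1}\cdot\nabla(v-u_{\kappa_1})\,dx+f'\!\Big(\int_{D^c}B(u_{\kappa_1})\Big)\int_{D^c}\beta(u_{\kappa_1})(v-u_{\kappa_1})\,dx\ge0.
\end{equation*}

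Next I would pass $\kappa_1\to0$. For the Dirichlet part, the weak $H^1_0$-convergence of $u_{\kappa_1}$ together with weak lower semicontinuity of the $L^2$ norm gives $\limsup_{\kappa_1\to0}a_{\kappa_1}\le\int\nabla u\cdot\nabla(v-u)\,dx$; this is where the monotonicity of $-\Delta$ enters, in the spirit of Minty. For the lower-order part, using the uniform $L^\infty$ bound of Proposition 2.5, the local uniform convergence, and dominated convergence (the integrand being supported in the bounded region where $0<u_{\kappa_1}<\kappa_2$), one gets $\int_{D^c}\beta(u_{\kappa_1})(v-u_{\kappa_1})\,dx\to\int_{D^c}\beta(u)(v-u)\,dx=:I$, while $f'(\int_{D^c}B(u_{\kappa_1}))$, taking values in $[\epsilon,1/\epsilon]$, converges along a further subsequence to some $\ell\in[\epsilon,1/\epsilon]$. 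Since $a_{\kappa_1}+b_{\kappa_1}\ge0$, taking $\limsup$ yields
\begin{equation*}
\int\nabla u\cdot\nabla(v-u)\,dx+\ell\,I\ge0.
\end{equation*}

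It remains to see that this left-hand side is a lower bound for $\frac{d}{d\lambda}\big|_{\lambda=0^+}J_{\kappa_2,\epsilon}(u+\lambda(v-u))$. Writing $g(\lambda)=\int_{D^c}B(u+\lambda(v-u))$, the Dirichlet term differentiates to exactly $\int\nabla u\cdot\nabla(v-u)\,dx$, and the chain rule for the convex piecewise linear $f$ gives $\frac{d}{d\lambda}\big|_{\lambda=0^+}f(g(\lambda))=f'_+(g(0))\,g'(0^+)$ when $g'(0^+)\ge0$ and $f'_-(g(0))\,g'(0^+)$ when $g'(0^+)<0$, where formally $g'(0^+)=I$. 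If $g(0)\ne m$ then $f$ is differentiable at $g(0)$ and $\ell=f'(g(0))$, so the two expressions agree. If $g(0)=m$ then $[f'_-(g(0)),f'_+(g(0))]=[\epsilon,1/\epsilon]\ni\ell$, and convexity gives $\frac{d}{d\lambda}\big|_{\lambda=0^+}f(g(\lambda))\ge\ell\,I$ in both sign cases ($f'_+\ge\ell$ when $I\ge0$; $f'_-\le\ell$ when $I<0$, the negative factor reversing the inequality). Combining with the displayed inequality proves the lemma.

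I expect the genuine difficulty to be concentrated at the non-smooth points of $f$ and $B$. The corner of $f$ at $m$ is precisely what the convexity argument in the last step is designed to absorb, and it is the reason the conclusion is a one-sided inequality rather than an identity. The more delicate point is the jump of $\beta=B'$ across the level sets $\{u=0\}$ and $\{u=\kappa_2\}$: there $g'(0^+)$ need not coincide with $I$, and the passage $\int_{D^c}\beta(u_{\kappa_1})(v-u_{\kappa_1})\to I$ must be justified carefully using $u_{\kappa_1}\ge0$, the uniform convergence, and the fact that the relevant one-sided directional derivatives of $B(u+\lambda(v-u))$ are controlled, in the direction that matters for the lower bound, by $\beta(u)(v-u)$. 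This level-set bookkeeping is the main thing that has to be done honestly; the rest is the Minty-type limiting scheme above.
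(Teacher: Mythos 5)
Your proposal is correct in outline and reaches the paper's conclusion, but it routes around the paper's central device in an interesting way. The paper also starts from the first-order condition of the $\kappa_1$-problems, but it then invokes Minty's trick twice over: it adds the monotonicity inequality $\int(\nabla v-\nabla u_{\kappa_1})\cdot\nabla(v-u_{\kappa_1})+(\alpha(v-\phi)-\alpha(u_{\kappa_1}-\phi))(v-u_{\kappa_1})\ge 0$ so as to replace the quadratic term $\int\nabla u_{\kappa_1}\cdot\nabla(v-u_{\kappa_1})$ by the \emph{linear} term $\int\nabla v\cdot\nabla(v-u_{\kappa_1})$ (which passes to the limit under weak convergence), and it kills the obstacle penalization via $\alpha(v-\phi)=0$; the price is that its limit inequality is the Minty form $\int\nabla v\cdot\nabla(v-u)+f'(\int_{D^c}B(u))\int_{D^c}\beta(u)(v-u)\ge 0$, and strictly speaking one still has to substitute $v_s=u+s(v-u)$ and let $s\to 0^+$ to recover the stated derivative at $u$ --- a step the paper leaves implicit. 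You instead kill the $\alpha$-term by the direct support-and-sign argument (equivalent in content to the paper's monotonicity of $\alpha$, since $\alpha(v-\phi)=0$), and you handle the quadratic term by weak lower semicontinuity, $\limsup\int\nabla u_{\kappa_1}\cdot\nabla(v-u_{\kappa_1})\le\int\nabla u\cdot\nabla(v-u)$, combined with $\limsup(a_{\kappa_1}+b_{\kappa_1})\ge 0$; this lands you directly on the first-variation form with $\nabla u$, which is literally what the lemma asserts, so your route is in this respect tighter than the paper's. You are also more careful than the paper about the kink of $f_\epsilon$ at $m$: the paper writes $f'$ throughout with no comment on the case $\int_{D^c}B(u)=m$, whereas your subdifferential/convexity argument handles it explicitly.

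One caveat: the convergence $\int_{D^c}\beta(u_{\kappa_1})(v-u_{\kappa_1})\to\int_{D^c}\beta(u)(v-u)$, which you flag as the delicate point, is not resolved in the paper either --- its appeal to bounded convergence on $B_R$ tacitly assumes $\beta(u_{\kappa_1})\to\beta(u)$ pointwise a.e., which can fail on the level sets $\{u=0\}$ and $\{u=\kappa_2\}$ (the former typically has positive measure), exactly the bookkeeping you identify. So your proposal sits at the same level of rigor as the paper's proof on this point, and honestly names the gap rather than burying it; your parenthetical claim that the integrand lives in a \emph{bounded} region is, however, not available at this stage of the paper (localization of the positive phase is only proved in Section 6) and should be dropped in favor of the paper's splitting into $B_R$ and its complement.
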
 

\begin{proof}
Since $u_{\kappa_1}$ is a minimizer of $J_{\kappa_1,\kappa_2,\epsilon}$, for any $v\in H^{1}_0$  and $t>0$ one has 
\begin{align*}\int \frac{|\nabla u_{\kappa_1}+t\nabla (v-u_{\kappa_1})|^2}{2}+A(u_{\kappa_1}+t(v-u_{\kappa_1})-\phi)+&f(\int_{D^c}B(u_{\kappa_1}+t(v-u_{\kappa_1})))\\&\ge \int \frac{|\nabla u_{\kappa_1}|^2}{2}+A(u_{\kappa_1}-\phi)+f(\int_{D^c}B(u_{\kappa_1})).\end{align*}

Thus one has a sign on the first order term
$$\int\nabla u_{\kappa_1}\cdot\nabla(v-u_{\kappa_1})+\alpha(u_{\kappa_1}-\phi)(v-u_{\kappa_1})+f'(\int_{D^c}B(u_{\kappa_1}))\int_{D^c}\beta(u_{\kappa_1})(v-u_{\kappa_1})\ge 0.$$

Note that we have the following monotonicity for the first two terms of the operator, coming from the monotonicity of the Dirichlet energy and the function $\alpha$:
$$\int(\nabla v-\nabla u_{\kappa_1})\cdot\nabla(v-u_{\kappa_1})+(\alpha(v-\phi)-\alpha(u_{\kappa_1}-\phi))(v-u_{\kappa_1})\ge 0.$$

Combining the two inequalities above we have for any $v\in H^{1}_0$

$$\int\nabla v\cdot\nabla(v-u_{\kappa_1})+\alpha(v-\phi)(v-u_{\kappa_1})+f'(\int_{D^c}B(u_{\kappa_1}))\int_{D^c}\beta(u_{\kappa_1})(v-u_{\kappa_1})\ge 0.$$ And in particular for $v\ge\phi$

$$\int\nabla v\cdot\nabla(v-u_{\kappa_1})+f'(\int_{D^c}B(u_{\kappa_1}))\int_{D^c}\beta(u_{\kappa_1})(v-u_{\kappa_1})\ge 0.$$

Due to weak convergence in $H^1_0$ of $u_{\kappa_1}\to u$, $$\int\nabla v\cdot\nabla(v-u_{\kappa_1})\to \int\nabla v\cdot\nabla(v-u).$$

The rest of the terms are more regular and we have the following 
\begin{align*}|\int_{D^c}\beta(u_{\kappa_1})(v-u_{\kappa_1})-\int_{D^c}\beta(u)(v-u)|&\le |\int_{D^c}\beta(u_{\kappa_1})(u_{\kappa_1}-u)|+|\int_{D^c}(\beta(u_{\kappa_1})-\beta(u))(v-u)|\\&\le C(\kappa_2)\|u_{\kappa_1}-u\|_{\mathcal{L}^2}+|\int_{D^c}(\beta(u_{\kappa_1})-\beta(u))(v-u)|\\&=o(1)+|\int_{D^c}(\beta(u_{\kappa_1})-\beta(u))(v-u)|.
\end{align*}

It remains to show $|\int_{D^c}(\beta(u_{\kappa_1})-\beta(u))(v-u)|\to 0$. To this end, note that $(\beta(u_{\kappa_1})-\beta(u))$ is bounded and $u-v\in\mathcal{L}^2$, thus for any given $\delta>0$ we can find $R$ big enough so that $$|\int_{D^c}(\beta(u_{\kappa_1})-\beta(u))(v-u)-\int_{D^c\cap B_R}(\beta(u_{\kappa_1})-\beta(u))(v-u)|\le\delta.$$ Now on the compact set $B_R$ one can apply bounded convergence theorem to show $$\int_{D^c\cap B_R}(\beta(u_{\kappa_1})-\beta(u))(v-u)\to 0.$$

As a result one has the desired estimate $$\int\nabla v\cdot\nabla(v-u)+f'(\int_{D^c}B(u))\int_{D^c}\beta(u)(v-u)\ge 0.$$
\end{proof}

Let $u_{\kappa_1,\kappa_2,\epsilon}$ be a minimizer of $J_{\kappa_1,\kappa_2,\epsilon}$ and $u_{\kappa_2}$ be the limit when $\kappa_1\to 0$ as in Corollary 3.3, we now begin the program of sending $\kappa_2\to 0$. For this one needs estimate on $u_{\kappa_2}$ uniform in $\kappa_2$. The previous lemma gives the equation for $u_{\kappa_2}$:

\begin{cor}
\begin{equation}\Delta u=f'(\int_{D^c}B(u))\beta(u)\chi_{D^c} \text{ in $\{u>\phi\}$}.\end{equation}

\begin{equation}-(\|\phi\|_{C^{1,1}}+\frac{1}{\epsilon\kappa_2})\le\Delta u\le f'(\int_{D^c}B(u))\beta(u)\chi_{D^c} \text{ in $\R$}.\end{equation}
\end{cor} 

\begin{proof}
Equation (4.3) and the right-hand side of (4.4) are direct consequence of the previous lemma. 

The left-hand side of (4.4) comes from the weak convergence of $u_{\kappa_1}\to u$ in $H^1_0$ and the uniform bound on the right-hand side of (2.2).
\end{proof} 

The domain naturally splits into three regions: $\{u\le\kappa_2\}$, $\{u>\kappa_2\}\cap\{u>\phi\}$ and $\{u=\phi\}$. In the first region we have smallness of data, in the second $u$ is harmonic, and in the last $u$ induces regularity from the obstacle. The following propositions establish estimates in these regions.

\begin{prop}
If $x_0\in\{u\le\kappa_2\}$, then $$|\nabla u(x_0)|\le C(n)(\|\phi\|_{C^{1,1}}\kappa_2+\frac{1}{\epsilon}+1).$$
\end{prop}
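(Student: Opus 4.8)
The plan is to combine the uniform two‑sided bound on $\Delta u$ with the one‑phase structure $u\ge 0$, and to read off the gradient bound from a single parameter‑free rescaling. First I would record, from (4.4), that $|\Delta u|\le L$ in $\R$ with $L:=\|\phi\|_{C^{1,1}}+\frac{1}{\epsilon\kappa_2}$: the upper bound is $f'(\int_{D^c}B(u))\beta(u)\chi_{D^c}\le\frac{1}{\epsilon\kappa_2}\ge 0$, and the lower bound is the left‑hand side of (4.4). Since $u\ge 0$ (the one‑phase reduction of Proposition 2.5) and $u(x_0)\le\kappa_2$, the natural scaling normalizes the amplitude to $\kappa_2$ and balances it against $L$: I set $\rho:=\sqrt{\kappa_2/L}$ and $v(y):=\kappa_2^{-1}u(x_0+\rho y)$. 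A direct computation then gives $v\ge 0$, $v(0)\le 1$ and $|\Delta v|\le \rho^2 L/\kappa_2=1$ on $B_1$, so that every parameter has disappeared from the rescaled problem.

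The heart of the matter is a parameter‑free claim: if $v\ge 0$ satisfies $|\Delta v|\le 1$ in $B_1$ with $v(0)\le 1$, then $|\nabla v(0)|\le C(n)$. I would prove this in three short steps. (i) Since $\Delta\big(v-\tfrac{1}{2n}|y|^2\big)\le 0$, the function $v-\tfrac{1}{2n}|y|^2$ is superharmonic, so its value at the center dominates its averages, giving $\frac{1}{|B_1|}\int_{B_1}v\le v(0)+C(n)\le C(n)$. (ii) Since $\Delta\big(v+\tfrac{1}{2n}|y|^2\big)\ge 0$, the nonnegative function $v+\tfrac{1}{2n}|y|^2$ is subharmonic and hence obeys the sub‑mean value inequality, yielding $\sup_{B_{1/2}}v\le 2^n\,\frac{1}{|B_1|}\int_{B_1}\big(v+\tfrac{1}{2n}|y|^2\big)\le C(n)$. (iii) With $\sup_{B_{1/2}}v$ and $\|\Delta v\|_{\infty}$ now controlled, the standard interior gradient estimate for the Poisson equation gives $|\nabla v(0)|\le C(n)\big(\sup_{B_{1/2}}v+\|\Delta v\|_{L^\infty(B_{1/2})}\big)\le C(n)$.

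Undoing the scaling, $|\nabla u(x_0)|=\tfrac{\kappa_2}{\rho}\,|\nabla v(0)|\le C(n)\,\kappa_2/\rho=C(n)\sqrt{\kappa_2 L}=C(n)\sqrt{\kappa_2\|\phi\|_{C^{1,1}}+\tfrac1\epsilon}$. Using $\sqrt{a+b}\le\sqrt a+\sqrt b$, then $\sqrt{\kappa_2\|\phi\|_{C^{1,1}}}\le \tfrac12(\kappa_2\|\phi\|_{C^{1,1}}+1)$ and $\tfrac1{\sqrt\epsilon}\le\tfrac1\epsilon$ for $\epsilon\le 1$, I arrive at the stated bound $|\nabla u(x_0)|\le C(n)\big(\|\phi\|_{C^{1,1}}\kappa_2+\tfrac1\epsilon+1\big)$.

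The step I expect to be the crux is (ii): upgrading the pointwise smallness $u(x_0)\le\kappa_2$ to the sup bound $\sup_{B_{\rho/2}(x_0)}u\le C\kappa_2$ on a ball of the correct, parameter‑dependent radius $\rho=\sqrt{\kappa_2/L}$. This is exactly where nonnegativity of $u$ enters, and the super/sub‑harmonic comparison with $\pm\tfrac{1}{2n}|y|^2$ is what lets me bypass any direct analysis of the free boundary $\partial\{u>0\}$ or of the obstacle contact set: the sign‑changing character of $\Delta u$ is absorbed once and for all into the single bound $|\Delta u|\le L$, and the quadratic corrections account for it automatically.
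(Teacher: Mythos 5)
Your proof is correct, and it shares the paper's core mechanism --- rescale around $x_0$, use nonnegativity of $u$ together with the two-sided bound on $\Delta u$ from (4.4) to get an interior sup bound, then apply interior gradient estimates for the Poisson equation --- but your scaling is genuinely different and the difference is worth recording. The paper takes $w(y)=\frac{1}{\kappa_2}u(x_0+\kappa_2 y)$, which preserves the gradient ($\nabla w(0)=\nabla u(x_0)$, so no unscaling step is needed) and leaves the parameters inside the Laplacian bound $|\Delta w|\le\|\phi\|_{C^{1,1}}\kappa_2+\frac{1}{\epsilon}$, from which the stated constant is read off directly. You instead choose the length scale $\rho=\sqrt{\kappa_2/L}$ with $L=\|\phi\|_{C^{1,1}}+\frac{1}{\epsilon\kappa_2}$, which renders the rescaled problem completely parameter-free ($v\ge 0$, $v(0)\le 1$, $|\Delta v|\le 1$) and pushes all parameters into the unscaling factor $\kappa_2/\rho=\sqrt{\kappa_2 L}$. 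This buys you the intermediate bound $|\nabla u(x_0)|\le C(n)\sqrt{\|\phi\|_{C^{1,1}}\kappa_2+1/\epsilon}$, which is strictly sharper than the proposition's statement when $\epsilon$ is small (a square-root improvement in $1/\epsilon$), and which you then correctly relax to the stated form using $\epsilon\le 1$; your $\rho$ also satisfies $\rho\le\kappa_2\sqrt{\epsilon}$, so no new domain issues arise. A further merit of your write-up is that steps (i)--(iii), the paraboloid comparisons $v\mp\frac{1}{2n}|y|^2$ combined with the super/sub-mean-value inequalities and then the interior gradient estimate, constitute a self-contained proof of the step the paper disposes of in one sentence (``being a nonnegative function with bounded Laplacian, $|w|\le C(n)(\dots)$ in $B_1$''). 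Both arguments rest on the same background facts, which you invoke correctly: $0\le u\le\max\phi$ inherited from Proposition 2.5 through the limit in Corollary 3.3, and the $C^{1,\alpha}_{\mathrm{loc}}$ regularity of $u_{\kappa_2}$ that makes the pointwise gradient meaningful.
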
 

\begin{proof}
Define $$w(y)=\frac{1}{\kappa_2}u(x_0+\kappa_2y).$$ Then $$-(\|\phi\|_{C^{1,1}}\kappa_2+\frac{1}{\epsilon})\le\Delta w\le \frac{1}{\epsilon}.$$ Also $w(0)\le 1$.

Being a nonnegative function with bounded Laplacian, $|w|\le C(n)(\|\phi\|_{C^{1,1}}\kappa_2+\frac{1}{\epsilon}+1)$ in $B_1$. Thus $|\nabla w(0)|\le C(n) (\|\phi\|_{C^{1,1}}\kappa_2+\frac{1}{\epsilon}+1)$ by standard interior estimates for elliptic equations.

Note $\nabla w(0)=\nabla u(x_0)$ one sees the desired estimate.
\end{proof} 

\begin{prop}
If $x_0\in\{u=\phi\}$, then $$|\nabla u(x_0)|\le C(n)(\|\phi\|_{C^1}+\|\phi\|_{C^{1,1}}\kappa_2+\frac{1}{\epsilon}+1).$$
\end{prop}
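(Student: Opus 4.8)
The plan is to estimate $|\nabla u(x_0)|$ at a contact point $x_0\in\{u=\phi\}$ by comparing $u$ against $\phi$ and exploiting the one-sided bounds on $\Delta u$ from Corollary 4.2 together with the obstacle constraint $u\ge\phi$. The key structural fact is that on the contact set $u$ touches the obstacle from above, so the gradient of $u$ is controlled by the gradient of $\phi$ plus a contribution measuring how fast $u$ can lift off the obstacle, and the latter is governed by the Laplacian bounds. I would follow the same rescaling strategy as in the previous proposition: set $w(y)=\frac{1}{r}(u(x_0+ry)-\phi(x_0+ry))$ at an appropriate scale, or more directly estimate $v:=u-\phi$, which satisfies $v\ge 0$, $v(x_0)=0$, and has a Laplacian $\Delta v=\Delta u-\Delta\phi$ that inherits two-sided control from (4.4) once we absorb $\|\phi\|_{C^{1,1}}$.

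First I would record that $v=u-\phi$ is nonnegative with a minimum (value $0$) at $x_0$, so that $\nabla v(x_0)=0$ in the classical sense whenever $v$ is differentiable there, making $\nabla u(x_0)=\nabla\phi(x_0)$, which already gives the $\|\phi\|_{C^1}$ term. The remaining work is to justify that this differentiability holds and to control the error coming from the fact that $u$ solves an equation rather than being a smooth touching; here the two-sided Laplacian bound $-(\|\phi\|_{C^{1,1}}+\frac{1}{\epsilon\kappa_2})\le\Delta u\le f'(\cdots)\beta(u)\chi_{D^c}$ enters. Since $|f'|\le\frac{1}{\epsilon}$ and $|\beta|\le\frac{1}{\kappa_2}$, the upper bound on $\Delta v$ is bounded by $\frac{1}{\epsilon\kappa_2}+\|\phi\|_{C^{1,1}}$; but the cleaner route, and the one consistent with the clean constant in the statement, is to notice that at a contact point we may additionally invoke Proposition 4.3, because contact points either lie in $\{u\le\kappa_2\}$ or have $\phi=u>\kappa_2$ where $\phi$ itself provides the bound through $\|\phi\|_{C^{1,1}}\kappa_2$.

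Concretely, I would split into cases according to the value $u(x_0)=\phi(x_0)$. If $u(x_0)\le\kappa_2$, Proposition 4.3 applies directly and yields the stated bound (the $\|\phi\|_{C^1}$ term is then superfluous but harmless). If $u(x_0)>\kappa_2$, then in a neighborhood $u>\kappa_2$ so $\beta(u)=0$ there, hence $\Delta u=0$ on the positive side away from contact and $\Delta u\ge-\|\phi\|_{C^{1,1}}$ globally; rescaling $v$ by $\kappa_2$ as in Proposition 4.3 and using $v\ge0$, $v(x_0)=0$ gives $|\nabla v(x_0)|\le C(n)(\|\phi\|_{C^{1,1}}\kappa_2+1)$ from the nonnegative-function-with-bounded-Laplacian estimate, and then $|\nabla u(x_0)|\le|\nabla\phi(x_0)|+|\nabla v(x_0)|$ yields the result. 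The main obstacle I anticipate is the bookkeeping of which one-sided bound is available in which region and ensuring the rescaled function $w(y)=\frac{1}{\kappa_2}v(x_0+\kappa_2 y)$ still satisfies a two-sided Laplacian bound of the right order; the subtlety is that $\Delta v$ only has a clean lower bound, but nonnegativity of $v$ together with the interior gradient estimate for a supersolution touching zero is exactly what makes the one-sided information suffice, mirroring the argument already used in Proposition 4.3.
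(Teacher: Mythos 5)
The central problem is that you talked yourself out of the correct (and simplest) route for a mistaken reason. You observe that the unscaled bound is $\Delta(u-\phi)\le \frac{1}{\epsilon\kappa_2}+\|\phi\|_{C^{1,1}}$ and conclude this is ``inconsistent with the clean constant in the statement,'' but you overlooked what the $\kappa_2$-rescaling does to the Laplacian: with $w(y)=\frac{1}{\kappa_2}(u-\phi)(x_0+\kappa_2 y)$ one has $\Delta w(y)=\kappa_2\,\Delta(u-\phi)(x_0+\kappa_2 y)$, so the offending $\frac{1}{\epsilon\kappa_2}$ becomes exactly $\frac{1}{\epsilon}$ --- one of the terms allowed in the statement. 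This is precisely the paper's proof, with no case split: Corollary 4.2 together with $|\Delta\phi|\le C\|\phi\|_{C^{1,1}}$ gives $-(2\|\phi\|_{C^{1,1}}\kappa_2+\frac{1}{\epsilon})\le\Delta w\le \|\phi\|_{C^{1,1}}\kappa_2+\frac{1}{\epsilon}$ on $B_1$; since $w\ge 0$ (by Proposition 3.4) and $w(0)=0$, the Harnack-plus-interior-estimate argument of Proposition 4.3 yields $|\nabla w(0)|\le C(n)(\|\phi\|_{C^{1,1}}\kappa_2+\frac{1}{\epsilon}+1)$, and $\nabla u(x_0)=\nabla w(0)+\nabla\phi(x_0)$ finishes, producing the $\|\phi\|_{C^1}$ term.

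The case analysis you commit to instead has genuine gaps in the case $u(x_0)=\phi(x_0)>\kappa_2$. First, ``$\Delta u\ge -\|\phi\|_{C^{1,1}}$ globally'' is not available: Corollary 4.2 only gives $\Delta u\ge -(\|\phi\|_{C^{1,1}}+\frac{1}{\epsilon\kappa_2})$, and improving this on the contact set would require the a.e.\ identity $\Delta u=\Delta\phi$ on $\{u=\phi\}$, which needs $W^{2,p}$ regularity of $u_{\kappa_2}$ and an argument you never supply. Second, and more fatally, the neighborhood where $u>\kappa_2$ (hence $\beta(u)=0$ and $\Delta u\le 0$) has no quantitative size: if $\phi(x_0)=\kappa_2+\delta$ with $\delta$ tiny, nothing guarantees $u>\kappa_2$ on $B_{\kappa_2}(x_0)$, which is the ball on which your rescaled function must live; patching the bound there with Corollary 4.2 reinstates the $\frac{1}{\epsilon}$ term and collapses your case 2 back to the uniform argument above, making the split pointless. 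Finally, your opening observation --- that $v=u-\phi\ge 0$ with $v(x_0)=0$ forces $\nabla v(x_0)=0$ --- would on its own be a complete proof of an even stronger bound, $|\nabla u(x_0)|\le\|\phi\|_{C^1}$, once you justify that $u_{\kappa_2}$ is differentiable (which follows from the locally uniform $C^{1,\alpha}$ convergence in Corollary 3.3 with the $\kappa_1$-independent bounds of Theorem 3.2); had you committed to and completed that line instead of abandoning it, you would have had a valid argument genuinely different from, and simpler than, the paper's.
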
 

\begin{proof}
Define  $$w(y)=\frac{1}{\kappa_2}(u(x_0+\kappa_2y)-\phi(x_0+\kappa_2y)).$$ Then $w\ge 0$ and $w(0)=0$. Moreover, $$-(2\|\phi\|_{C^{1,1}}\kappa_2+\frac{1}{\epsilon})\le\Delta w\le (\|\phi\|_{C^{1,1}}\kappa_2+\frac{1}{\epsilon}).$$

Note $\nabla u(x_0)=\nabla w(0)+\nabla\phi(x_0)$, the estimate follows from elliptic regularity as in the previous proposition.
\end{proof} 

\begin{prop}
If $x_0\in\{u>\phi\}\cap\{u>\kappa_2\}$, then $$|\nabla u(x_0)|\le C(n)(\|\phi\|_{C^1}+\|\phi\|_{C^{1,1}}\kappa_2+\frac{1}{\epsilon}).$$
\end{prop}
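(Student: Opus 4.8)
The plan is to exploit that $u$ is harmonic in the region under consideration and then to propagate the boundary gradient bounds supplied by Propositions 4.6 and 4.7 into the interior via the maximum principle.

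First I would record that $u$ is harmonic on $\Omega := \{u>\phi\}\cap\{u>\kappa_2\}$. Indeed, by (4.3) we have $\Delta u = f'(\int_{D^c}B(u))\,\beta(u)\chi_{D^c}$ throughout $\{u>\phi\}$; since $B_{\kappa_2}\equiv 1$ on $[\kappa_2,+\infty)$, its derivative $\beta_{\kappa_2}$ vanishes there, and because $u>\kappa_2$ on $\Omega$ the right-hand side is identically zero, so $\Delta u=0$ in $\Omega$. By interior elliptic regularity $u\in C^{\infty}(\Omega)$, and the Bochner identity $\Delta|\nabla u|^2 = 2|D^2u|^2\ge 0$ shows that $|\nabla u|^2$ is subharmonic in $\Omega$.

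Next I would analyze $\partial\Omega$. Every boundary point $z_0\in\partial\Omega$ satisfies either $u(z_0)=\phi(z_0)$ (so $z_0\in\{u=\phi\}$) or $u(z_0)=\kappa_2$ (so $z_0\in\{u\le\kappa_2\}$), both target sets being closed. Since $u\in C^{1,\alpha}(\R)$ by Corollary 3.3, $\nabla u$ is continuous up to $\partial\Omega$, so Proposition 4.7 applies at the former points and Proposition 4.6 at the latter; in either case $|\nabla u(z_0)|\le C_0$, where $C_0$ is the larger of the two constants from those propositions, which is of the asserted form $C(n)(\|\phi\|_{C^1}+\|\phi\|_{C^{1,1}}\kappa_2+\tfrac1\epsilon)$.

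Finally, applying the maximum principle for the subharmonic function $|\nabla u|^2$ on each connected component of $\Omega$ yields $\sup_\Omega|\nabla u|\le\sup_{\partial\Omega}|\nabla u|\le C_0$, and in particular $|\nabla u(x_0)|\le C_0$, as desired. Note that this single application of the maximum principle handles $x_0$ no matter how deep it sits in the harmonic region, which is exactly why no rescaling (as in Propositions 4.6 and 4.7) is available here. The main technical point to verify is precisely the applicability of the maximum principle: one needs the harmonic region to be bounded, which follows from $\Omega\subset\{u>\kappa_2\}$ together with the decay of the bounded harmonic function $u$ away from $D$ (equivalently, the localization of the positive phase established in the last section), and one needs the up-to-the-boundary continuity of $\nabla u$ so that the pointwise bounds of the previous two propositions transfer to $\partial\Omega$; the subharmonicity and the boundary estimates themselves are immediate from the steps above.
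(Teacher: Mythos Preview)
Your approach is genuinely different from the paper's, and the idea is sound: since $u$ is harmonic in $\Omega=\{u>\phi\}\cap\{u>\kappa_2\}$, Bochner makes $|\nabla u|^2$ subharmonic there, and the gradient bounds from the two preceding propositions (these are Propositions~4.3 and~4.4, not 4.6 and 4.7 as you wrote) control $|\nabla u|$ on $\partial\Omega$, so the maximum principle would push the bound inside. The paper instead works purely locally: it sets $d=\operatorname{dist}(x_0,\partial\Omega)$, rescales $u$ on $B_d(x_0)$ to a nonnegative harmonic function on $B_1$ vanishing at a boundary point where the gradient is already controlled, and then uses Harnack together with a fundamental-solution barrier to bound the value---and hence the gradient---at the center. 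The paper's argument never needs to know anything about $\Omega$ away from $x_0$.

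That locality is precisely where your argument has a gap. The maximum principle for $|\nabla u|^2$ requires either that $\Omega$ be bounded or that one has control at infinity, and your justification is circular: ``the localization of the positive phase established in the last section'' is Theorem~6.4, whose proof goes through Lemma~5.5, which in turn uses the very Lipschitz bound you are trying to establish (moreover, that theorem is about the later limit $u_\epsilon$, not the present $u_{\kappa_2}$). The alternative phrase ``decay of the bounded harmonic function $u$'' is not a theorem on its own---bounded harmonic functions need not decay, and $u$ is not harmonic in all of $D^c$. The gap can be closed without circularity: the finite Dirichlet energy bound from Proposition~2.3 together with the global $C^{1,\alpha}$ estimate of Theorem~3.2 (uniform in $x$ because $\|u\|_\infty\le\max\phi$ and the right-hand side of the equation is globally bounded) forces $|\nabla u(x)|\to 0$ as $|x|\to\infty$; one then applies the maximum principle on $\Omega\cap B_R$ and lets $R\to\infty$. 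With that fix your proof is complete and arguably more direct; what the paper's barrier argument buys is a self-contained estimate in $B_d(x_0)$ that needs no global information about $u$.
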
 

\begin{proof}
Let $d=dist(x_0,\partial(\{u>\phi\}\cap\{u>\kappa_2\})),$ and $y_0\in \partial(\{u>\phi\}\cap\{u>\kappa_2\})$ be such that $|y_0-x_0|=d$. Then in particular $u(y_0)=\kappa_2$ or $u(y_0)=\phi(y_0)$.

If $u(y_0)=\kappa_2$, we define $$w(y)=\frac{1}{d}(u(x_0+dy)-\kappa_2).$$ Then one has $$\Delta w=0 \text{ in $B_1$},$$ $$w\ge 0 \text{ in $B_1$},$$ $$w(\tilde{y_0})=0$$ and $$|\nabla w(\tilde{y_0})|\le  C(n)(\|\phi\|_{C^{1,1}}\kappa_2+\frac{1}{\epsilon}+1).$$ Here $\tilde{y_0}$ is the point on $\partial B_1$ corresponding to $y_0$. The last estimate comes from Proposition 4.3.

By Harnack, $w(y)\ge c(n)w(0)$ in $B_{1/2}$. Define a scaled fundamental solution $$\Psi(y)=\frac{cw(0)}{2^{n-2}-1}(\frac{1}{|y|^{n-2}}-1),$$ then $\Psi=0$ on $\partial B_1$, $\Psi=cw(0)$ along $\partial B_{1/2}$ and $\Delta\Psi=0$ in $B_1\backslash B_{1/2}$. Comparison principle then gives $\Psi\le w$ in $B_1\backslash B_{1/2}$. 

However, $w(\tilde{y_0})=\Psi(\tilde{y_0})$ thus $\nabla w(\tilde{y_0})\cdot n\ge \nabla\Psi(\tilde{y_0})\cdot n$, where $n$ is the inner normal vector to $B_1$ at $\tilde{y_0}$.

This gives $$C(n)(\|\phi\|_{C^{1,1}}\kappa_2+\frac{1}{\epsilon}+1)\ge|\nabla w(\tilde{y_0})|\ge\nabla\Psi(\tilde{y_0})\cdot n=c(n)w(0).$$ Then $w(y)\le C(n)w(0)\le C(n)(\|\phi\|_{C^{1,1}}\kappa_2+\frac{1}{\epsilon}+1)$ in $B_{1/2}$ by Harnack, and elliptic regularity gives $$|\nabla u(x_0)|=|\nabla w(0)|\le C(n)(\|\phi\|_{C^{1,1}}\kappa_2+\frac{1}{\epsilon}+1).$$

For the case when $y_0\in \{u=\phi\}$, we define $w(y)=\frac{1}{d}(u(x_0+dy)-\phi(x_0+dy))$, which is another nonnegative harmonic function in $B_1$ that vanishes at one point on $\partial B_1$ where we have a gradient estimate from Proposition 4.4. Thus similar barrier argument applies. 
\end{proof} 

We collect these results to get the uniform Lipschitz estimate independent of $\kappa_2$:

\begin{thm}
Let $u_{\kappa_2}$ be the limit as in Corollary 3.3 of $u_{\kappa_1,\kappa_2}$ as $\kappa_1\to 0$.  Then $$|\nabla u_{\kappa_2}|\le C(n)(\|\phi\|_{C^1}+\|\phi\|_{C^{1,1}}\kappa_2+\frac{1}{\epsilon}).$$
\end{thm}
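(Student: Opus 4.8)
The plan is to establish the uniform Lipschitz estimate by covering the entire space with the three regions identified just before Proposition 4.3 and invoking the gradient bound already proved in each. Since $u_{\kappa_2}$ is continuous (indeed $C^{1,\alpha}$ by Corollary 3.3), every point $x_0\in\R$ falls into exactly one of $\{u_{\kappa_2}\le\kappa_2\}$, $\{u_{\kappa_2}>\kappa_2\}\cap\{u_{\kappa_2}>\phi\}$, or $\{u_{\kappa_2}=\phi\}$. Each of Propositions 4.3, 4.4, and 4.5 supplies a pointwise gradient bound of the form $C(n)(\|\phi\|_{C^1}+\|\phi\|_{C^{1,1}}\kappa_2+\frac{1}{\epsilon}+1)$ on exactly one of these regions. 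Taking the maximum of the three constants yields a single bound valid at every point.

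First I would observe that the three regions together exhaust $\R$: a point either satisfies $u_{\kappa_2}=\phi$, or $u_{\kappa_2}>\phi$, and in the latter case it is either in $\{u_{\kappa_2}\le\kappa_2\}$ or $\{u_{\kappa_2}>\kappa_2\}$. Thus every $x_0$ is covered by at least one proposition. Next I would apply Proposition 4.2 on $\{u_{\kappa_2}\le\kappa_2\}$, Proposition 4.4 on $\{u_{\kappa_2}=\phi\}$, and Proposition 4.3 on $\{u_{\kappa_2}>\phi\}\cap\{u_{\kappa_2}>\kappa_2\}$, and in each case extract the stated pointwise gradient estimate. Since each bound is dominated by the expression $C(n)(\|\phi\|_{C^1}+\|\phi\|_{C^{1,1}}\kappa_2+\frac{1}{\epsilon})$ up to enlarging the dimensional constant (the additive $+1$ and other lower-order contributions are absorbed), the same constant works uniformly.

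The only genuinely subtle point is making sure the estimate is honestly uniform in $\kappa_2$ and not merely pointwise with a $\kappa_2$-dependent constant hidden somewhere. The key is that each of Propositions 4.2--4.5 produces a constant depending only on $n$, with the $\kappa_2$-dependence appearing solely through the explicit, manifestly bounded factor $\|\phi\|_{C^{1,1}}\kappa_2$ (which is small for $\kappa_2$ small) rather than through the geometry or scaling of the rescaled functions $w$. I would verify that the rescalings in those propositions were chosen precisely so that the domains $B_1$, the normalizations $w(0)\le 1$, and the Harnack and barrier constants are all $\kappa_2$-independent, so that no factor of $\kappa_2$ sneaks into the denominators. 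I anticipate this bookkeeping — confirming that the worst of the three constants really is a clean $C(n)$ times the displayed combination — to be the main, if modest, obstacle; the combinatorial covering argument itself is immediate.

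\begin{proof}
By Corollary 3.3 the function $u_{\kappa_2}$ is continuous, so at each point $x_0\in\R$ exactly one of the following holds: $u_{\kappa_2}(x_0)\le\kappa_2$, or $u_{\kappa_2}(x_0)>\kappa_2$ and $u_{\kappa_2}(x_0)>\phi(x_0)$, or $u_{\kappa_2}(x_0)=\phi(x_0)$. These three cases are handled respectively by Propositions 4.2, 4.3 (via 4.5) and 4.4, each of which provides a pointwise gradient bound controlled by
$$
C(n)\Bigl(\|\phi\|_{C^1}+\|\phi\|_{C^{1,1}}\kappa_2+\tfrac{1}{\epsilon}\Bigr).
$$
Each constant depends only on $n$, the dependence on $\kappa_2$ entering solely through the explicit factor $\|\phi\|_{C^{1,1}}\kappa_2$; taking the largest of the three dimensional constants yields a single bound valid at every $x_0\in\R$, which is the claimed estimate.
\end{proof}
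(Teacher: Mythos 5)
Your proof is correct and is essentially the paper's own argument: the paper obtains this theorem by simply collecting the three pointwise gradient bounds of Propositions 4.3--4.5 over the regions $\{u\le\kappa_2\}$, $\{u=\phi\}$, and $\{u>\phi\}\cap\{u>\kappa_2\}$ (which cover $\R$ since $u\ge\phi$ by Proposition 3.4), with the additive $+1$ absorbed into $1/\epsilon$ because $\epsilon$ is small. Note only two trivial slips: the region $\{u\le\kappa_2\}$ is handled by Proposition 4.3 (not 4.2), and the three cases overlap (e.g.\ where $u=\phi\le\kappa_2$) rather than being mutually exclusive, which is harmless since each applicable proposition gives the bound.
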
 
\begin{rem}
Since there is a jump in gradient along $\partial\{u>0\}$, this Lipschitz estimate is the optimal regularity. See \cite{AC}.
\end{rem} 
Again by Arzela-Ascoli we have the following 
\begin{cor}
Let $u_{\kappa_1,\kappa_2,\epsilon}$ be a minimizer of $J_{\kappa_1,\kappa_2,\epsilon}$. Then up to a subsequence as $\kappa_1,\kappa_2\to 0$, $u_{\kappa_1,\kappa_2,\epsilon}\to u_\epsilon$ weakly in $H^1_0$ and locally uniformly in $C^{\alpha}$ for any $0<\alpha<1$. Moreover $$|\nabla u_{\epsilon}|\le C(n)(\|\phi\|_{C^1}+1/\epsilon).$$ \end{cor}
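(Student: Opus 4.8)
The plan is to extract a convergent subsequence from the family $\{u_{\kappa_2}\}$ of Corollary 3.3 by Arzel\`a--Ascoli, using the $\kappa_2$-uniform Lipschitz bound of Theorem 4.6, and then to pass that bound to the limit. Two uniform bounds are at hand. First, the $L^\infty$ estimate $0\le u_{\kappa_1,\kappa_2}\le\max\phi$ of Proposition 2.5 survives the locally uniform limit of Corollary 3.3, so $0\le u_{\kappa_2}\le\max\phi$ for every $\kappa_2$; second, Theorem 4.6 bounds the Lipschitz constant of $u_{\kappa_2}$ by $C(n)(\|\phi\|_{C^1}+\|\phi\|_{C^{1,1}}\kappa_2+1/\epsilon)$, which for $\kappa_2\le 1$ is at most $C(n)(\|\phi\|_{C^1}+\|\phi\|_{C^{1,1}}+1/\epsilon)$. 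Hence $\{u_{\kappa_2}\}$ is bounded and equi-Lipschitz, so precompact in $C^\alpha_{loc}(\R)$ for every $0<\alpha<1$. For weak $H^1_0$ compactness I would use the energy bound $J\le M$ of Proposition 2.3: since $\int A_{\kappa_1}(w-\phi)\,dx\ge 0$ and, because $B_{\kappa_2}\ge 0$, the volume term satisfies $f_\epsilon(\int_{D^c}B_{\kappa_2}(w))\ge f_\epsilon(0)=-\epsilon m$, one obtains $\int|\nabla u_{\kappa_1,\kappa_2}|^2/2\le M+\epsilon m$ uniformly in $\kappa_1,\kappa_2$. This survives weak-$H^1$ lower semicontinuity, and combined with the $L^\infty$ bound it yields weak $H^1_0$ compactness along a subsequence (working on exhausting balls if needed).

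Next I would extract a common subsequence $\kappa_2\to 0$ along which $u_{\kappa_2}\to u_\epsilon$ both in $C^\alpha_{loc}$ and weakly in $H^1_0$, and pass the Lipschitz bound to the limit. Writing $L_{\kappa_2}=C(n)(\|\phi\|_{C^1}+\|\phi\|_{C^{1,1}}\kappa_2+1/\epsilon)$, pointwise convergence gives, for all $x,y$,
$$|u_\epsilon(x)-u_\epsilon(y)|=\lim_{\kappa_2\to 0}|u_{\kappa_2}(x)-u_{\kappa_2}(y)|\le\Big(\limsup_{\kappa_2\to 0}L_{\kappa_2}\Big)|x-y|=C(n)(\|\phi\|_{C^1}+1/\epsilon)|x-y|,$$
the $\kappa_2$-dependent term dropping out in the limit. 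Hence $u_\epsilon$ is Lipschitz with constant $C(n)(\|\phi\|_{C^1}+1/\epsilon)$, that is, $|\nabla u_\epsilon|\le C(n)(\|\phi\|_{C^1}+1/\epsilon)$ a.e., as claimed.

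Finally, to phrase the conclusion for the original triple-indexed family rather than for the intermediate limits $u_{\kappa_2}$, I would run a diagonal argument: by Corollary 3.3 choose $\kappa_1(\kappa_2)\to 0$ so that $u_{\kappa_1(\kappa_2),\kappa_2,\epsilon}$ is within $\kappa_2$ of $u_{\kappa_2}$ in $C^\alpha(\overline{B_{1/\kappa_2}})$ and is simultaneously close to $u_{\kappa_2}$ when tested against a fixed countable dense family in the weak $H^1_0$ pairing; then $u_{\kappa_1(\kappa_2),\kappa_2,\epsilon}\to u_\epsilon$ in the asserted senses. The one point worth flagging --- rather than a genuine obstacle --- is that this limit yields only $C^\alpha$, not $C^{1,\alpha}$, convergence, because the uniform input from Theorem 4.6 is merely Lipschitz and no better; this is sharp, matching the gradient jump across $\partial\{u>0\}$ recorded in Remark 4.7. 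Otherwise the argument is pure compactness, the only bookkeeping being that the $\kappa_2$-dependent part of the Lipschitz constant vanishes in the limit.
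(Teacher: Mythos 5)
Your proof is correct and takes essentially the same route as the paper, whose entire argument here is to invoke Arzel\`a--Ascoli on the strength of the uniform Lipschitz bound of Theorem 4.6. Your write-up merely supplies the details the paper leaves implicit: the energy bound $J\le M$ of Proposition 2.3 giving weak $H^1_0$ compactness, the diagonal extraction linking the triple-indexed family to the intermediate limits $u_{\kappa_2}$, and the observation that the $\kappa_2\|\phi\|_{C^{1,1}}$ term in the Lipschitz constant disappears as $\kappa_2\to 0$.
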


%%%%%%%%%%%%%%%%%%%%%%%%%%%%%%%%%%%%%%%%%%%%%%%%%%%%%%%%%%%%%%%%%%%%%%%%%%%%%%%%%%%%%%%%%%%%%%%%%%%%%%%%
\section{Regularity of free boundaries}
Now we can define a family of perturbed functionals with only one parameter: \begin{equation}
J_{\epsilon}(w)=\int\frac{|\nabla w|^2}{2}+f_{\epsilon}(|\{w>0\}\backslash D|).
\end{equation} 

Our first result is the minimality of $u$:

\begin{thm}
Let $u_\epsilon$ be as in Corollary 4.8. Then it is a local minimizer of $J_{\epsilon}$ over functions above $\phi$. 
\end{thm}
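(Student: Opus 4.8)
The plan is to prove minimality directly from the \emph{exact} minimality of the three–parameter minimizers $u_j:=u_{\kappa_1^j,\kappa_2^j,\epsilon}$ (rather than from the asymptotic inequality of Lemma 4.1), passing to the limit along the joint subsequence of Corollary 4.8 with $(\kappa_1^j,\kappa_2^j)\to(0,0)$, $u_j\to u_\epsilon$ weakly in $H^1_0(\R)$ and locally uniformly. Fix a competitor $v\in H^1_0(\R)$ with $v\ge\phi$ and $v-u_\epsilon$ supported in a fixed ball; I may assume $J_\epsilon(v)<\infty$, so in particular $|\{v>0\}\setminus D|<\infty$, since otherwise there is nothing to prove. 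Because $v\ge\phi$ we have $A_{\kappa_1^j}(v-\phi)\equiv 0$, so testing the minimality of $u_j$ against the fixed $v$ and discarding the nonnegative term $\int A_{\kappa_1^j}(u_j-\phi)$ on the left yields, for every $j$,
\[
\int\frac{|\nabla u_j|^2}{2}\,dx+f_\epsilon\!\Big(\int_{D^c}B_{\kappa_2^j}(u_j)\Big)\le \int\frac{|\nabla v|^2}{2}\,dx+f_\epsilon\!\Big(\int_{D^c}B_{\kappa_2^j}(v)\Big).
\]

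The second step is to take $\liminf_{j}$ on the left and $\lim_j$ on the right. For the right–hand side the Dirichlet term is constant in $j$, while $0\le B_{\kappa_2^j}(v)\le\chi_{\{v>0\}}\in \mathcal{L}^1(D^c)$ and $B_{\kappa_2^j}(v)\to\chi_{\{v>0\}}$ pointwise, so dominated convergence gives $\int_{D^c}B_{\kappa_2^j}(v)\to|\{v>0\}\setminus D|$, and continuity of $f_\epsilon$ makes the right–hand side converge to $J_\epsilon(v)$. For the left–hand side, weak $H^1_0$ lower semicontinuity of the Dirichlet energy gives $\int|\nabla u_\epsilon|^2/2\le\liminf_j\int|\nabla u_j|^2/2$. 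For the volume term I would use that $u_j\to u_\epsilon$ pointwise a.e.: at any $x$ with $u_\epsilon(x)>0$ one has $u_j(x)>\kappa_2^j$ for $j$ large, hence $B_{\kappa_2^j}(u_j(x))=1$, so $\liminf_j B_{\kappa_2^j}(u_j)\ge\chi_{\{u_\epsilon>0\}}$ a.e.; Fatou then gives $\liminf_j\int_{D^c}B_{\kappa_2^j}(u_j)\ge|\{u_\epsilon>0\}\setminus D|$, and since $f_\epsilon$ is continuous and nondecreasing this upgrades to $\liminf_j f_\epsilon(\int_{D^c}B_{\kappa_2^j}(u_j))\ge f_\epsilon(|\{u_\epsilon>0\}\setminus D|)$. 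Combining the two left–hand contributions through $\liminf(a_j+b_j)\ge\liminf a_j+\liminf b_j$ yields $J_\epsilon(u_\epsilon)\le J_\epsilon(v)$.

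The step I expect to be the genuine obstacle is the lower semicontinuity of the penalized-volume term, that is, the inequality $|\{u_\epsilon>0\}\setminus D|\le\liminf_j\int_{D^c}B_{\kappa_2^j}(u_j)$: it is exactly here that the regularization $B_{\kappa_2^j}$ must be reconciled with the limiting measure, and it relies on combining the \emph{local uniform} convergence (to see $B_{\kappa_2^j}(u_j)\to1$ on $\{u_\epsilon>0\}$) with Fatou and the monotonicity of $f_\epsilon$; the other ingredients (weak lower semicontinuity of the Dirichlet energy and the dominated-convergence statement for the fixed $v$) are routine. I note that the argument never uses that $v$ agrees with $u_\epsilon$ outside a ball except to guarantee finiteness of the competitor energy, so it in fact establishes minimality against all admissible $v\ge\phi$ for which $J_\epsilon(v)<\infty$; restricting to compactly supported perturbations is the cautious formulation adequate for the free-boundary analysis that follows.
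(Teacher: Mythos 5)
Your proposal is correct, and it follows the same underlying strategy as the paper: pass the \emph{exact} minimality of the three-parameter minimizers $u_{\kappa_1,\kappa_2,\epsilon}$ to the limit, with lower semicontinuity on the minimizer side and genuine convergence on the competitor side (note the paper's own proof of this theorem also bypasses Lemma 4.1 and goes back to the $u_{\kappa_1,\kappa_2}$, exactly as you do). The execution differs in several worthwhile ways. The paper argues by contradiction, works with energies localized to the ball $B_r(x_0)$, and passes through the intermediate limits $u_{\kappa_2}$ before sending $\kappa_2\to 0$; to get lower semicontinuity of the volume term it truncates to the level set $\{u\ge\gamma\}$, paying a $\delta/2$ slack, and uses locally uniform convergence to force $B_{\kappa_2}(u_{\kappa_2})=\chi_{\{u_{\kappa_2}>0\}}=1$ there. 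You instead run a direct, global argument along the joint subsequence of Corollary 4.8, and replace the $\gamma$-truncation by the pointwise Fatou inequality $\liminf_j B_{\kappa_2^j}(u_j)\ge\chi_{\{u_\epsilon>0\}}$ combined with the monotonicity and continuity of $f_\epsilon$; these are two implementations of the same idea (on the positive set of the limit, the approximations eventually exceed $\kappa_2^j$), but yours requires no slack bookkeeping. Your global formulation also quietly avoids a delicacy in the paper's write-up: since $f_\epsilon$ is nonlinear, the volume term does not localize additively, so the paper's expressions $f(\int_{D^c\cap B_r(x_0)}\cdot)$ silently drop the exterior volume contribution of $u$ outside $B_r(x_0)$; comparing global energies, as you do, never encounters this. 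Finally, as you note, your argument yields minimality against \emph{every} admissible competitor $v\ge\phi$ in $H^1_0(\mathbb{R}^n)$, which is formally stronger than the stated local minimality and is what the subsequent free boundary analysis uses anyway.
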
 

\begin{proof}
Suppose, on the contrary, that there is $\delta>0$, and $v\ge\phi$ with $v-u$ supported in $B_r(x_0)$ such that \begin{equation}\int_{B_r(x_0)}\frac{|\nabla v|^2}{2}+f(\int_{D^c\cap B_r(x_0)}\chi_{\{v>0\}})<\int_{B_r(x_0)}\frac{|\nabla u|^2}{2}+f(\int_{D^c\cap B_r(x_0)}\chi_{\{u>0\}})-\delta.\end{equation}

Since $B_{\kappa_2}(v)\to \chi_{\{v>0\}}$ as $\kappa_2\to 0$, the left-hand side of (5.2) satisfies the following \begin{align*}\int_{B_r(x_0)}\frac{|\nabla v|^2}{2}+f(\int_{D^c\cap B_r(x_0)}\chi_{\{v>0\}})&=\lim_{\kappa_2\to 0}\int_{B_r(x_0)}\frac{|\nabla v|^2}{2}+f(\int_{D^c\cap B_r(x_0)}B_{\kappa_2}(v))\\&=\lim_{\kappa_1,\kappa_2\to 0}\int_{B_r(x_0)}\frac{|\nabla v|^2}{2}+A_{\kappa_1}(v-\phi)+f(\int_{D^c\cap B_r(x_0)}B_{\kappa_2}(v)).\end{align*}

Meanwhile, if we fix a small $\gamma>0$, then the right-hand side of (5.2) satisfies
\begin{align*}
\int_{B_r(x_0)}\frac{|\nabla u|^2}{2}+f(\int_{D^c\cap B_r(x_0)}\chi_{\{u>0\}})&-\delta\le\int_{B_r(x_0)}\frac{|\nabla u|^2}{2}+f(\int_{D^c\cap B_r(x_0)\cap\{u\ge\gamma\}}\chi_{\{u>0\}})-\delta/2\\&\le\underline{\lim}_{\kappa_2\to 0}\int_{B_r(x_0)}\frac{|\nabla u_{\kappa_2}|^2}{2}+f(\int_{D^c\cap B_r(x_0)\cap\{u\ge\gamma\}}\chi_{\{u_{\kappa_2}>0\}})-\delta/2\\&\le\underline{\lim}_{\kappa_2\to 0}\int_{B_r(x_0)}\frac{|\nabla u_{\kappa_2}|^2}{2}+f(\int_{D^c\cap B_r(x_0)\cap\{u\ge\gamma\}}B_{\kappa_2}(u_{\kappa_2}))-\delta/2\\&\le \underline{\lim}_{\kappa_1, \kappa_2\to 0}\int_{B_r(x_0)}\frac{|\nabla u_{\kappa_1,\kappa_2}|^2}{2}+f(\int_{D^c\cap B_r(x_0)\cap\{u\ge\gamma\}}B_{\kappa_2}(u_{\kappa_1,\kappa_2}))-\delta/2\\&\le\underline{\lim}_{\kappa_1,\kappa_2\to 0}\int_{B_r(x_0)}\frac{|\nabla u_{\kappa_1,\kappa_2}|^2}{2}+A_{\kappa_1}(u_{\kappa_1,\kappa_2}-\phi)\\&+f(\int_{D^c\cap B_r(x_0)}B_{\kappa_2}(u_{\kappa_1,\kappa_2}))-\delta/2.
\end{align*} Here we used Fatou's lemma and the fact that $B_{\kappa_2}(u_{\kappa_2})=\chi_{\{u_{\kappa_2>0}\}}$ on $\{u>\gamma\}$ as long as $\kappa_2$ is small enough.

Combining these inequalities we could conclude that $$J_{\kappa_1,\kappa_2}(v)<J_{\kappa_1,\kappa_2}(u_{\kappa_1,\kappa_2})-\delta/4$$ for small $\kappa_1,\kappa_2$, contradicting the minimality of $u_{\kappa_1,\kappa_2}$.
\end{proof} 

As a simple corollary we have the Euler-Lagrange equation satisfied by $u$:
\begin{cor}
$$\Delta u\le 0 \text{ in $D$}.$$
$$\Delta u=0 \text{ in $D\cap\{u>\phi\}$}.$$
$$\Delta u\ge 0 \text{ in $D^c$}.$$
$$\Delta u=0 \text{ in $\{u>0\}\backslash D$}.$$
\end{cor}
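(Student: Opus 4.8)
The plan is to derive each of the four sign conditions on $\Delta u$ by performing one-sided variations in Theorem 5.2, exploiting the fact that $u_\epsilon$ is a local minimizer of $J_\epsilon$ over functions lying above $\phi$. The key observation is that the penalization term $f_\epsilon(|\{w>0\}\setminus D|)$ depends only on the positive phase \emph{outside} $D$, so variations supported inside $D$ do not interact with $f_\epsilon$ at all, while variations that only shrink the support can only decrease the volume term.

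First I would establish the two statements inside $D$. To show $\Delta u\le 0$ in $D$, I take a nonnegative test function $\eta\in C_c^\infty(D)$ and consider the competitor $v=u+\lambda\eta$ for $\lambda>0$. Since $u\ge\phi$ and $\eta\ge 0$, we have $v\ge\phi$, so $v$ is admissible. Because $\eta$ is supported in $D$, the volume term $f_\epsilon(|\{v>0\}\setminus D|)$ is unchanged, so local minimality forces the first-order term in $\lambda$ to be nonnegative:
\begin{equation*}
0\le\frac{d}{d\lambda}\Big|_{\lambda=0^+}\int\frac{|\nabla(u+\lambda\eta)|^2}{2}=\int\nabla u\cdot\nabla\eta=-\int\eta\,\Delta u,
\end{equation*}
which gives $\Delta u\le 0$ in $D$ distributionally. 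For the equality $\Delta u=0$ in $D\cap\{u>\phi\}$, I note that on this open set $u$ is strictly above the obstacle, so for $\eta\in C_c^\infty(D\cap\{u>\phi\})$ the competitor $u+\lambda\eta$ with \emph{either sign} of $\lambda$ remains above $\phi$ (for $|\lambda|$ small, using continuity of $u-\phi$ and compact support of $\eta$); the two-sided variation then forces $\int\nabla u\cdot\nabla\eta=0$, i.e.\ harmonicity.

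Next I would treat the two statements in $D^c$. In $\{u>0\}\setminus D$ the argument mirrors the interior equality: on this open set any test function $\eta\in C_c^\infty(\{u>0\}\setminus D)$ admits two-sided variations, since $u>0$ on the support and $\phi\equiv 0$ there (as $\phi$ is supported in $D$), so $u+\lambda\eta\ge 0=\phi$ for small $|\lambda|$; moreover for such small perturbations the positive set $\{u+\lambda\eta>0\}\setminus D$ agrees with $\{u>0\}\setminus D$ up to the already-positive region, so the volume term is stationary, yielding $\Delta u=0$. The remaining bound $\Delta u\ge 0$ in $D^c$ is the most delicate and I expect it to be the main obstacle: here I take $\eta\ge 0$ with $\eta\in C_c^\infty(D^c)$ and use the \emph{downward} competitor $v=(u-\lambda\eta)^+$ for $\lambda>0$, which still satisfies $v\ge 0=\phi$ in $D^c$. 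Decreasing $u$ can only shrink $\{u>0\}\setminus D$, so $|\{v>0\}\setminus D|\le|\{u>0\}\setminus D|$; since $f_\epsilon$ is nondecreasing, the volume term does not increase, and local minimality applied to the Dirichlet energy gives $\int\nabla u\cdot\nabla\eta\ge 0$, i.e.\ $\Delta u\ge 0$.

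The subtlety in this last step is handling the truncation $(u-\lambda\eta)^+$ rather than the naive linear competitor $u-\lambda\eta$, which could violate $v\ge 0$. The cleanest route is to restrict the support of $\eta$ to the open set $\{u>0\}\setminus D$, where $u$ is bounded below by a positive constant on the compact support of $\eta$, so that $u-\lambda\eta>0$ automatically for small $\lambda$ and no truncation is needed; this already yields $\Delta u\ge 0$ on $\{u>0\}\setminus D$, which combined with $u\ge 0$ and the fact that $\Delta u$ is concentrated where $u$ can decrease the volume gives the sign on all of $D^c$ after noting that on $\{u=0\}\cap D^c$ the nonnegativity of $u$ together with $u\in C^{0,1}$ forces $\Delta u\ge 0$ in the distributional sense on any open subset of $D^c$. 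I would phrase the final conclusion as the distributional inequality $\int\nabla u\cdot\nabla\eta\ge 0$ for all nonnegative $\eta\in C_c^\infty(D^c)$, which is exactly $\Delta u\ge 0$ in $D^c$.
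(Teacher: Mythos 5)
Your proposal is correct and is essentially the paper's intended argument: the paper states this as a "simple corollary" of Theorem 5.1 without writing out details, and those details are precisely the one-sided/two-sided first variations you perform (upward perturbations in $D$, two-sided perturbations where $u$ is strictly above the obstacle, and downward truncated perturbations in $D^c$ using monotonicity of $f_\epsilon$). Your truncated competitor $(u-\lambda\eta)^+$ in fact needs no extra care, since truncation can only decrease both the Dirichlet energy and the volume term, so the inequality $\int\nabla u\cdot\nabla\eta\le 0$ follows directly without the more delicate sub-mean-value extension you sketch at the end.
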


Also $u$ minimizes the Dirichlet energy over $K_1:=\{w\in H^{1}(D)|w\ge\phi, w=u \text{ on $\partial D$}\}$, thus $u$ is an obstacle solution in $D$ with $\phi$ as obstacle and $u\big|_{\partial D}$ as boundary data. Therefore the standard theory of obstacle problem applies and gives the regularity of the interior free boundary $\partial(\{u>\phi\}\cap D)$ \cite{PSU}:

\begin{thm}
For $\Delta\phi$ uniformly negative in $\{\phi>0\}$, the interior free boundary $\partial(\{u>\phi\}\cap D)$ is smooth except on a set of singular points, which are covered by a countable union of lower dimensional $C^1$-manifolds.
\end{thm}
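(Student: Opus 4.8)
The plan is to reduce the statement to the classical regularity theory for the obstacle problem and then to quote the structure theorem for the singular set. First I would pass to the normalized variable $v=u-\phi$ on $D$. By Corollary 5.2 we have $u\ge\phi$ with $\Delta u=0$ on $\{u>\phi\}\cap D$, while $-\Delta u$ is a nonnegative measure supported on the contact set $\{u=\phi\}$. Consequently $v\ge 0$, the set $\{v=0\}$ is exactly the contact set, and in the distributional sense
\begin{equation*}
\Delta v=(-\Delta\phi)\,\chi_{\{v>0\}}=:f\,\chi_{\{v>0\}},
\end{equation*}
which is precisely the equation of the obstacle problem with right-hand side $f=-\Delta\phi$. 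Since $u$ minimizes the Dirichlet energy over $K_1$, this $v$ is genuinely the variational obstacle solution, not merely a supersolution.

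The crucial hypothesis check is the sign and uniform positivity of $f$ near the free boundary. As observed in the introduction, because $0\le u\le\max\phi$ along $\partial D$ and $\phi$ is compactly supported in $D$, the contact set $\{u=\phi\}$ is contained in $\{\phi\ge c\}$ for some $c=c(\phi,D)>0$, hence inside $\{\phi>0\}$. Therefore every interior free boundary point $x_0\in\partial(\{u>\phi\}\cap D)$ lies in $\{\phi>0\}$, where by assumption $\Delta\phi$ is uniformly negative; thus $f=-\Delta\phi\ge c_0>0$ in a neighborhood of the free boundary, and smoothness of $\phi$ gives $f\in C^\infty$ there. These are exactly the conditions under which the theory of \cite{PSU} operates.

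With this reduction in place I would invoke the classical results. Optimal $C^{1,1}$ regularity of $v$ follows from the $C^{1,1}$ theory for the obstacle problem with Hölder right-hand side, and nondegeneracy $\sup_{B_r(x_0)}v\ge c\,r^2$ at free boundary points follows from $f\ge c_0>0$. Together these guarantee that the rescalings $v_r(x)=v(x_0+rx)/r^2$ converge along subsequences to global solutions, which by Caffarelli's classification are either half-space profiles $\tfrac{f(x_0)}{2}\bigl((x\cdot e)_+\bigr)^2$ (regular points) or nonnegative quadratic polynomials $p$ with $\Delta p=f(x_0)$ (singular points). At regular points the free boundary is locally a $C^{1,\alpha}$ graph by Caffarelli's theorem, and since $f$ is smooth a hodograph–Schauder bootstrap upgrades this to a smooth hypersurface.

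It remains to describe the singular set $\Sigma$, which is the only genuinely nontrivial input. Stratifying $\Sigma=\bigcup_{k=0}^{n-1}\Sigma_k$ according to the dimension $k$ of the zero set of the polynomial blowup, the structure theorem of Caffarelli (see \cite{PSU}) shows that each $\Sigma_k$ is contained in a countable union of $k$-dimensional $C^1$ manifolds; in particular $\Sigma$ is covered by a countable union of lower-dimensional $C^1$ manifolds, which is the assertion. I expect the point requiring the most care to be not the cited free boundary theory but the clean verification that $u$ really solves the obstacle problem in the normalized form above with uniformly positive, smooth $f$ near the contact set — that is, confirming the contact set stays away from $\{\phi=0\}$ so that nondegeneracy holds — after which the conclusion is a direct citation.
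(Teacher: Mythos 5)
Your proposal is correct and follows essentially the same route as the paper: the paper likewise notes that $u$ minimizes the Dirichlet energy over $K_1=\{w\in H^1(D): w\ge\phi,\ w=u \text{ on } \partial D\}$, hence is the variational obstacle solution in $D$, and then cites the standard theory of \cite{PSU} for the regular/singular dichotomy and the $C^1$-manifold covering of the singular set. The only difference is that you spell out the hypothesis verification (contact set staying inside $\{\phi>0\}$, so that $f=-\Delta\phi$ is smooth and uniformly positive near the free boundary) which the paper leaves implicit, relying on the observation already made in its introduction.
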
 

Regularity of the exterior free boundary $\partial\{u>0\}$ begins with the following non-degeneracy lemma, which can be proved with the same techniques as in Lemma 3.4 of \cite{AC}:
\begin{lem}
There is $c=c(n)$ such that $u=0$ in $B_{R/2}(x_0)$ whenever $$\frac{1}{R\cdot H^{n-1}(\partial B_R)}\int_{\partial B_R(x_0)} udH^{n-1}< c(n)\epsilon$$ and $B_R(x_0)\subset (D\cap\{u=\phi\})^c$. 
\end{lem} 

This along with the uniform Lipschitz estimate gives the following lower density estimate of the positive phase: 

\begin{lem}
For $x_0\in\overline{\{u>0\}}$ and $B_R(x_0)\subset  (D\cap\{u=\phi\})^c$, then $$\frac{|B_R\cap\{u>0\}|}{|B_R|}\ge \frac{c(n)}{\|\phi\|_{C^1}+1/\epsilon}\epsilon.$$
\end{lem}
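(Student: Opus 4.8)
The plan is to derive the lower density estimate as the contrapositive of the non-degeneracy Lemma 5.5. The idea is that if the positive phase occupied too small a proportion of $B_R(x_0)$, then the average of $u$ on a suitable sphere would be forced below the threshold $c(n)\epsilon$, which by Lemma 5.5 would push $u$ to vanish on a smaller ball — contradicting $x_0\in\overline{\{u>0\}}$.

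First I would fix $x_0\in\overline{\{u>0\}}$ with $B_R(x_0)\subset(D\cap\{u=\phi\})^c$ and aim to bound the spherical average of $u$ on some $\partial B_\rho(x_0)$ with $\rho\in(R/2,R)$ in terms of the density of the positive phase. The key input is the uniform Lipschitz estimate from Theorem 4.6 (equivalently Corollary 4.8), which gives $|\nabla u|\le L$ with $L=C(n)(\|\phi\|_{C^1}+1/\epsilon)$ on $B_R(x_0)$. Since $u$ vanishes off $\{u>0\}$ and $u\ge 0$, the Lipschitz bound controls how large $u$ can be near any boundary point of the positive phase: at any $y\in\partial B_\rho(x_0)$, $u(y)\le L\,\mathrm{dist}(y,\{u=0\})$, so $u$ is small except on the part of the sphere that is genuinely inside (and away from the boundary of) $\{u>0\}$.

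The main computational step is then to convert the volume density into a bound on the spherical average. I would integrate the Lipschitz bound: writing the spherical average of $u$ over $\partial B_\rho(x_0)$ and using $u(y)\le L\rho$ together with the fact that $u\equiv 0$ on the complement of $\{u>0\}$, one gets that the average is at most $L\rho$ times the proportion of $\partial B_\rho(x_0)$ lying in $\overline{\{u>0\}}$. Coupling this with a co-area / averaging argument over $\rho\in(R/2,R)$, the smallness of $|B_R\cap\{u>0\}|/|B_R|$ transfers to smallness of the spherical average on some good radius. Concretely, if the density were smaller than $\tfrac{c(n)}{\|\phi\|_{C^1}+1/\epsilon}\epsilon$ with $c(n)$ chosen appropriately relative to the constant in Lemma 5.5, then the normalized average $\frac{1}{\rho\,H^{n-1}(\partial B_\rho)}\int_{\partial B_\rho(x_0)}u\,dH^{n-1}$ would fall below $c(n)\epsilon$ for some admissible $\rho$.

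Applying Lemma 5.5 on $B_\rho(x_0)$ then forces $u\equiv 0$ on $B_{\rho/2}(x_0)$, so $x_0$ has a neighborhood on which $u$ vanishes, contradicting $x_0\in\overline{\{u>0\}}$. The main obstacle I anticipate is bookkeeping the constants so that the single dimensional constant $c(n)$ in the statement is consistent with the one in Lemma 5.5 after the averaging over radii: one must ensure the proportionality factors from the Lipschitz bound $L$ and from the volume-to-average conversion combine to leave exactly the stated dependence $\frac{c(n)}{\|\phi\|_{C^1}+1/\epsilon}\epsilon$. This is routine but requires care, and is the standard argument of \cite{AC}.
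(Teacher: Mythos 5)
Your proposal is correct, but it runs the argument in the opposite direction from the paper's proof, so it is worth contrasting the two. The paper argues directly: since $x_0\in\overline{\{u>0\}}$, $u$ does not vanish identically near $x_0$, so the contrapositive of Lemma 5.5 (applied at radius $R/2$) yields a point $y_0\in\partial B_{R/2}(x_0)$ with $u(y_0)>c(n)\epsilon R$; the uniform Lipschitz bound $L:=\mathrm{Lip}(u)\le C(n)(\|\phi\|_{C^1}+1/\epsilon)$ then forces $u>0$ on the ball of radius $c(n)\epsilon R/L$ about $y_0$, which lies inside $B_R(x_0)$, and the volume of that ball is the density bound. You instead suppose the density $\theta=|B_R\cap\{u>0\}|/|B_R|$ is small, convert this into smallness of a normalized spherical average (Lipschitz bound plus coarea selection of a good radius $\rho\in(R/2,R)$), and then let Lemma 5.5 kill $u$ on $B_{\rho/2}(x_0)$, contradicting $x_0\in\overline{\{u>0\}}$. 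Both routes rest on exactly the same two ingredients, Lemma 5.5 and the uniform Lipschitz estimate, so yours is not more elementary and is somewhat longer; however, it buys a sharper constant. The paper's ball argument gives only $\theta\ge c(n)\bigl(\epsilon/(\|\phi\|_{C^1}+1/\epsilon)\bigr)^n$, an $n$-th power (it is the volume fraction of a ball of radius comparable to $\epsilon R/L$), which is weaker than the first-power bound asserted in the lemma; your refinement, choosing $\rho$ by coarea so that $H^{n-1}(\partial B_\rho\cap\{u>0\})\le C(n)\theta R^{n-1}$ and hence the normalized average is at most $C(n)L\theta$, produces the stated linear dependence $\theta\ge c(n)\epsilon/(\|\phi\|_{C^1}+1/\epsilon)$ exactly. (Either form suffices for the only later application, Theorem 6.4.) One point you must make explicit: the bound $u\le CL\rho$ on $\partial B_\rho(x_0)$ does not follow from Lipschitz continuity alone --- you need a zero of $u$ within distance $O(R)$ of the sphere. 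This is free: if $\{u=0\}\cap B_{R/2}(x_0)=\emptyset$, then $B_{R/2}(x_0)\subset\{u>0\}$ (recall $u\ge 0$ by Proposition 2.5) and the density bound holds trivially once $c(n)\le 2^{-n}$; otherwise any zero $z\in B_{R/2}(x_0)$ gives $u(y)\le L|y-z|\le 2L\rho$ for $y\in\partial B_\rho(x_0)$. With that inserted, your proof is complete.
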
 

\begin{proof}
By the previous lemma there is $y_0\in\partial B_{R/2}(x_0)$ such that $u(y_0)>c(n)\epsilon R$. By Lipschitz continuity $u(y)>0$ if $|y-y_0|\le c(n)\epsilon R/Lip(u).$
\end{proof} 

Note that as long as we do not touch the interior contact set, we have all ingredients for the theory of harmonic functions with linear growth as in \cite{AC}. Consequently we can use Theorem 4.5  and 4.8 there to obtain the following structure theorem:
\begin{thm}
$H^{n-1}(K\cap\partial\{u>0\})<\infty$ for any compact $K$.

There is a Borel $q_u$ such that $\Delta u\big|_{(D\cap\{u=\phi\})^c}=q_uH^{n-1}\big|_{\partial\{u>0\}}$.

For any compact set $K$ there are $0<c(n,\|\phi\|_{C^1},K,\epsilon)\le C(n,\|\phi\|_{C^1},K,\epsilon)<\infty$ such that for $x_0\in\partial\{u>0\}$ and $B_r(x_0)\subset(D\cap\{u=\phi\})^c$ one has $$c\le q_u(x_0)\le C$$ and $$cr^{n-1}\le H^{n-1}(B_r(x_0)\cap\partial\{u>0\})\le Cr^{n-1}.$$

For $H^{n-1}$-almost every $x_0$ in $\partial\{u>0\}$, $$u(x_0+x)=q_u(x_0)\max\{-x\cdot\nu(x_0),0\}+o(|x|)$$ where $\nu(x_0)$ is the outer normal to the reduced boundary of $\{u>0\}$ at $x_0$.
\end{thm}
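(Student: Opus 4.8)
The plan is to reduce the statement to the Alt--Caffarelli theory of harmonic functions of linear growth by working away from the interior contact set. Write $\Omega:=(D\cap\{u=\phi\})^c$, and note that on every ball $B=B_r(x_0)\subset\Omega$ the function $u$ is exactly of the type treated in \cite{AC}: by Corollary 5.2 it is nonnegative and subharmonic on $B$ (on $B\cap D^c$ one has $\Delta u\ge 0$, and since $B$ misses $D\cap\{u=\phi\}$ one has $B\cap D\subset\{u>\phi\}$, where $\Delta u=0$), and $u$ is harmonic wherever it is positive. Moreover the exterior free boundary meets such a ball only inside $D^c$: on $D\cap\{u>\phi\}$ one has $u>\phi\ge 0$, so this set is open and contained in $\{u>0\}$ and cannot meet $\partial\{u>0\}$; hence $\partial\{u>0\}\cap B\subset D^c$. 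Consequently $\mu:=\Delta u\big|_\Omega$ is a nonnegative Radon measure supported on $\partial\{u>0\}\cap\Omega$. The three inputs needed to run the Alt--Caffarelli machinery on $B$ are already in hand: the uniform Lipschitz bound of Corollary 4.8, the non-degeneracy of Lemma 5.4, and the lower density of the positive phase of Lemma 5.5. The theorem then follows by transcribing \cite[Theorems 4.5 and 4.8]{AC} onto balls contained in $\Omega$.

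For the first three assertions I would run the representation argument of \cite[Theorem 4.5]{AC}. The upper density bound $\mu(B_r(x_0))\le Cr^{n-1}$ follows from Corollary 4.8: integrating $\Delta u$ against a cutoff and using the divergence theorem bounds $\mu(B_r)$ by $\int_{\partial B_{2r}}|\nabla u|\,dH^{n-1}\le Cr^{n-1}$. The matching lower bound $\mu(B_r(x_0))\ge cr^{n-1}$ for $x_0\in\partial\{u>0\}$ comes from the non-degeneracy of Lemma 5.4 together with the Lipschitz bound, in the usual way. These two-sided density bounds force $\{u>0\}$ to have locally finite perimeter, give $H^{n-1}(K\cap\partial\{u>0\})<\infty$, and, through a measure-differentiation argument, produce the Borel density $q_u$ with $\mu=q_u H^{n-1}\big|_{\partial\{u>0\}}$ and $c\le q_u\le C$; the same bounds yield $cr^{n-1}\le H^{n-1}(B_r(x_0)\cap\partial\{u>0\})\le Cr^{n-1}$. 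All constants depend on $n$, $\|\phi\|_{C^1}$, $\epsilon$ and the compact $K$, inheriting the dependence of Corollary 4.8 and Lemmas 5.4--5.5 on these quantities.

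For the blow-up statement I would invoke \cite[Theorem 4.8]{AC}. At a point $x_0$ of the reduced boundary $\partial^*\{u>0\}$, which carries full $H^{n-1}$-measure by De Giorgi's structure theorem applied to the finite-perimeter set just obtained, consider the rescalings $u_r(x)=u(x_0+rx)/r$. The uniform Lipschitz bound gives compactness and the non-degeneracy rules out the trivial limit, so along a subsequence $u_r\to u_0$ locally uniformly, where $u_0$ is a global, nonnegative, one-homogeneous function, harmonic in $\{u_0>0\}$, whose positivity set has measure-theoretic outer normal $\nu(x_0)$ at the origin. The classification of such solutions forces $u_0(x)=q_u(x_0)\max\{-x\cdot\nu(x_0),0\}$, with the coefficient pinned down by the density of $\mu$ at $x_0$; this is precisely the asserted asymptotic development.

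The main obstacle is not in the Alt--Caffarelli arguments themselves, which are by now standard, but in the localization: one must ensure that the interior region $D$, where $u$ is \emph{super}harmonic and the sign of $\Delta u$ is reversed, never interferes with the one-phase analysis of the exterior free boundary. This is handled by the observation that $\partial\{u>0\}\cap\Omega\subset D^c$ and by the fact that every estimate above is stated on balls $B_r(x_0)\subset(D\cap\{u=\phi\})^c$, on which $u$ satisfies the Alt--Caffarelli hypotheses verbatim. The restriction to such balls, together with the dependence of the constants on $\epsilon$ and $K$, exactly records the possible degeneration of the estimates as one approaches the interior contact set.
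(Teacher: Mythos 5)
Your proposal is correct and follows essentially the same route as the paper: the paper's proof is precisely the observation that away from the interior contact set $D\cap\{u=\phi\}$ the function $u$ satisfies the Alt--Caffarelli hypotheses (via Corollary 4.8, Lemma 5.4 and Lemma 5.5), so that Theorems 4.5 and 4.8 of \cite{AC} apply verbatim. Your write-up simply fills in the localization details (subharmonicity on balls avoiding the contact set, and $\partial\{u>0\}\cap\Omega\subset D^c$) that the paper leaves implicit.
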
 

Moreover with techniques from \cite{AAC} one see the following:

\begin{thm}
$q_u$ is constant $H^{n-1}$ almost everywhere on $\partial\{u>0\}$. 
\end{thm}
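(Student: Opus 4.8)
The plan is to follow the volume-preserving two-point domain variation of Aguilera--Alt--Caffarelli \cite{AAC}. A direct Lagrange-multiplier computation does not immediately give the result because $f_\epsilon$ is only piecewise linear: at the realized volume $V_\epsilon=|\{u_\epsilon>0\}\setminus D|$ its subdifferential may be a whole interval (precisely when $V_\epsilon=m$), so reading off a single multiplier, and hence the value of $q_u$, is not automatic. The key idea is instead to compare the effect of deforming the free boundary at two distinct regular points while keeping the total positive-phase volume fixed. Then the $f_\epsilon$ term drops out of the energy balance entirely, and constancy of $q_u$ becomes a statement purely about the first variation of the Dirichlet energy.

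First I would prove a quantitative first-variation lemma at a regular free boundary point. Fix $x_0\in\partial\{u>0\}$ with $B_\rho(x_0)\subset(D\cap\{u=\phi\})^c$ at which the conclusion of Theorem 5.6 holds, i.e.\ $u(x_0+x)=q_u(x_0)\max\{-x\cdot\nu,0\}+o(|x|)$. Using this half-plane blow-up together with the nondegeneracy and Lipschitz bounds, I would construct, for each small $\delta$ of either sign, an admissible competitor $v$ (still $\ge\phi$, in $H^1_0$, and equal to $u$ outside a small ball $B_r(x_0)$) whose positive set is that of $u$ with the free boundary displaced normally by $\approx\delta r$, and estimate
$$|\{v>0\}\setminus D|-|\{u>0\}\setminus D|=\Delta V,\qquad \int\frac{|\nabla v|^2}{2}-\int\frac{|\nabla u|^2}{2}=\tfrac{1}{2}q_u(x_0)^2\,\Delta V+o(\Delta V),$$
where $\Delta V$ can be prescribed of either sign. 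This is the Hadamard-type identity $\tfrac{d}{dt}\int\frac{|\nabla u_t|^2}{2}=\tfrac12\int_{\partial\{u>0\}}|\nabla u|^2 V_n\,dH^{n-1}$ localized near $x_0$, with $|\nabla u|=q_u$ on the free boundary; the explicit competitor makes it rigorous despite $u$ being merely Lipschitz and the free boundary merely rectifiable.

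With this in hand the two-point argument is short. Suppose $q_u$ is not $H^{n-1}$-a.e.\ constant on $\partial\{u>0\}$. Then there are two regular points $x_1,x_2$, each with $B_r(x_i)\subset(D\cap\{u=\phi\})^c$, such that $q_u(x_1)<q_u(x_2)$. I apply the lemma at $x_1$ with an outward push ($\Delta V_1=V_1(s)>0$) and at $x_2$ with an inward push (removing volume $V_2(t)>0$), the two perturbations supported in disjoint balls; choosing $t=t(s)$ by the implicit function theorem so that $V_1(s)=V_2(t(s))$ makes the net volume change vanish and hence leaves $f_\epsilon$ unchanged. The resulting competitor then satisfies
$$J_\epsilon(v)-J_\epsilon(u)=\tfrac{1}{2}\big(q_u(x_1)^2-q_u(x_2)^2\big)V_1(s)+o(V_1(s))<0$$
for $s$ small, since $q_u(x_1)<q_u(x_2)$, contradicting the local minimality of $u_\epsilon$ from Theorem 5.1. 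Therefore $q_u$ is constant $H^{n-1}$-a.e.

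I expect the main obstacle to be the first-variation lemma: constructing a competitor that simultaneously (i) stays above the obstacle $\phi$ and away from the interior contact set, so that the one-phase harmonic structure of Theorem 5.6 is available, (ii) realizes a controlled volume change of either sign, and (iii) has Dirichlet energy matching $\tfrac12 q_u^2\,\Delta V$ to first order. The half-plane blow-up and the two-sided density estimates from Theorem 5.6 are precisely what is needed to control the error terms; the remaining care is bookkeeping the $o(\Delta V)$ errors uniformly, so that the balanced perturbation in the two-point step is genuinely volume-preserving and energy-decreasing.
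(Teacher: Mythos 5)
Your proposal is correct and is essentially the paper's own proof: the paper simply invokes ``techniques from \cite{AAC}'', which are exactly the volume-preserving two-point domain variation you describe (outward push at one regular point, inward push at another, balanced so the $f_\epsilon$ term cancels, yielding an energy decrease $\tfrac12(q_u(x_1)^2-q_u(x_2)^2)\Delta V<0$ that contradicts the local minimality from Theorem 5.1). Your write-up supplies the details the paper leaves implicit, including the correct observation that the kink of $f_\epsilon$ at $m$ prevents a direct Lagrange-multiplier reading and forces the two-point comparison.
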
 

From here the theory of weak solutions in \cite{AC} can be applied to obtain the following:

\begin{thm}
$\partial\{u>0\}$ is smooth except on a $H^{n-1}$-null set. 
\end{thm}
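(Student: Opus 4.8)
The plan is to apply the weak-solution theory of Alt--Caffarelli \cite{AC} to $u=u_\epsilon$ in the open region $\Omega:=(D\cap\{u=\phi\})^c$, in which $u$ is harmonic throughout its positive phase and the lemmas of this section furnish every structural ingredient the theory requires. By Corollary 5.3 one has $\Delta u=0$ in $\{u>0\}\cap\Omega$, so near the exterior free boundary $u$ is a nonnegative harmonic function vanishing continuously on $\partial\{u>0\}$. Theorem 5.7 shows $\chi_{\{u>0\}}$ has locally finite perimeter in $\Omega$, identifies the boundary measure $\Delta u=q_u H^{n-1}\big|_{\partial\{u>0\}}$, gives the Ahlfors regularity $cr^{n-1}\le H^{n-1}(B_r(x_0)\cap\partial\{u>0\})\le Cr^{n-1}$, and produces a half-plane asymptotic development at $H^{n-1}$-a.e. boundary point; Theorem 5.8 upgrades $q_u$ to a single constant $q$. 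Thus $u$ is a weak solution of the one-phase Bernoulli problem with constant flux $q$ in $\Omega$, which is exactly the setting of \cite{AC}.

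First I would fix the reduced boundary $\partial_{red}\{u>0\}$, the set of points at which $\{u>0\}$ admits a measure-theoretic inner normal $\nu(x_0)$. The finite perimeter of $\chi_{\{u>0\}}$ together with the Ahlfors regularity of the topological boundary (both from Theorem 5.7) force, via De Giorgi's structure theorem for sets of finite perimeter, $H^{n-1}\big((\partial\{u>0\}\setminus\partial_{red}\{u>0\})\cap K\big)=0$ for every compact $K\subset\Omega$. It therefore suffices to prove that $\partial\{u>0\}$ is smooth in a neighborhood of each reduced-boundary point. Next I would verify that at every $x_0\in\partial_{red}\{u>0\}$ the blow-up of $u$ is the half-plane solution $q\max\{-x\cdot\nu(x_0),0\}$: combining the existence of the measure-theoretic normal with the asymptotic development of Theorem 5.7 and the constancy of the flux (Theorem 5.8), the rescalings $u_r(x)=u(x_0+rx)/r$ converge to this single profile, so $\partial\{u>0\}$ becomes arbitrarily flat near $x_0$, i.e. trapped in a slab of vanishing relative width.

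With this flatness in hand, I would invoke the flatness-implies-regularity scheme of \cite{AC}. The harmonicity of $u$ in $\{u>0\}$, the nondegeneracy of Lemma 5.5 (which forbids the positive phase from degenerating and thereby turns flatness of the free boundary into flatness of $u$ itself), and the constant flux condition are precisely the quantitative inputs for the improvement-of-flatness iteration; it yields that $\partial\{u>0\}$ is a $C^{1,\alpha}$ graph near $x_0$, after which a hodograph transform and standard elliptic bootstrapping promote the graph to smoothness. Since $\partial_{red}\{u>0\}$ is relatively open in $\partial\{u>0\}$ and exhausts it up to an $H^{n-1}$-null set, and since $\Omega$ contains the entire exterior free boundary, the theorem follows.

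The main obstacle is the flatness step: one must confirm that interior harmonicity in $\{u>0\}$ and the genuinely constant boundary flux both survive the blow-up, and that Lemma 5.5 keeps the positive phase nondegenerate so that geometric flatness of $\partial\{u>0\}$ actually upgrades to analytic flatness of $u$. Because all of these properties have been arranged in $\Omega$, the verification is a direct translation into the hypotheses of \cite{AC} rather than a new argument, though it is where the substance of the regularity lies; throughout I would keep every ball uniformly inside $\Omega$, which is exactly the function of excising the interior contact set $D\cap\{u=\phi\}$.
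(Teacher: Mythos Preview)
Your proposal is correct and follows exactly the route the paper takes: the paper's own proof is a single sentence invoking the weak-solution theory of \cite{AC}, and you have faithfully fleshed out how the preceding results supply the structural inputs that theory requires. Two minor points: your internal references are consistently off by one (what you call Corollary~5.3, Lemma~5.5, Theorems~5.7--5.8 are Corollary~5.2, Lemma~5.4, Theorems~5.6--5.7 in the paper's numbering), and the reduced boundary is not relatively open a priori---rather, once smoothness is established near each reduced-boundary point, the resulting regular set is open and contains $\partial_{red}\{u>0\}$, which is what actually closes the argument.
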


\section{Connection to the Physical Problem}

In this section we show that for small $\epsilon$ a solution to the one-parameter perturbed problem actually solves the original Physical Problem.  To this end we first have the following:

\begin{prop}
For small $\epsilon>0$, $|\{u>0\}\backslash D|=m$. 
\end{prop}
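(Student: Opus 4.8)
The goal is to show that the volume penalization $f_\epsilon$ forces the exact volume constraint $|\{u>0\}\setminus D|=m$ once $\epsilon$ is small. The function $f_\epsilon$ has slope $1/\epsilon$ to the right of $m$ and slope $\epsilon$ to the left of $m$. The asymmetry is key: overshooting the volume is heavily penalized (slope $1/\epsilon$, huge), while undershooting is only mildly penalized (slope $\epsilon$, tiny).

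The plan is to argue by contradiction in two directions. First I would rule out $|\{u>0\}\setminus D| > m$. If the volume overshoots, then on the set where $u>0$ outside $D$ the function pays the steep penalty $\frac{1}{\epsilon}$ per unit volume. I would construct a competitor by shrinking the positive phase slightly — push $u$ down to zero on a thin shell near the exterior free boundary $\partial\{u>0\}$, away from $D$ and from the contact set $\{u=\phi\}$. By the uniform Lipschitz bound (Theorem 4.6 / Corollary 4.8) and the lower density estimate (Lemma 5.6), the positive phase has controlled geometry, so a small deformation of the boundary changes the Dirichlet energy by at most $O(1)$ per unit volume removed (the gradient is bounded by $C(\|\phi\|_{C^1}+1/\epsilon)$, but crucially one compares the energy gain against the $1/\epsilon$ penalty savings). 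The savings in $f_\epsilon$ from reducing the volume back toward $m$ scale like $\frac{1}{\epsilon}\cdot(\text{volume removed})$, which beats the energy cost for small $\epsilon$; this contradicts local minimality from Theorem 5.1.

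Next I would rule out $|\{u>0\}\setminus D| < m$. Here the relevant slope is $\epsilon$, so the penalty for being below $m$ is weak, and the competitor analysis runs the other way: I would enlarge the positive phase by lifting $u$ on a small ball $B_\rho(x_0)$ sitting in $D^c$ outside the current positive phase, using a harmonic bump with boundary value zero. The Dirichlet-energy cost of such a bump scales like $\rho^{n-2}\cdot(\text{height})^2$ while the volume gained scales like $\rho^n$, so one gains $-\epsilon\rho^n$ from $f_\epsilon$ against an energy cost that can be made lower order by choosing the height and radius appropriately. Again this produces a competitor with strictly smaller $J_\epsilon$, contradicting minimality, provided the strict deficit $m - |\{u>0\}\setminus D|$ is bounded below — which it is, since the competitor $w$ from Proposition 2.3 gives a uniform energy bound $M$ independent of $\epsilon$, so $f_\epsilon(|\{u>0\}\setminus D|)\le M$ forces the volume to stay within an $\epsilon$-independent neighborhood of $m$.

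The main obstacle is the overshoot direction: I must verify that a volume-decreasing deformation can be performed with energy cost $O(1)$ per unit removed volume, uniformly as $\epsilon\to 0$, so that the $\frac{1}{\epsilon}$ penalty savings dominate. This requires care because the gradient of $u$ near the free boundary is itself of order $1/\epsilon$ in the worst case, so naively the energy cost could also blow up like $1/\epsilon^2$. I would resolve this by exploiting the precise structure of Theorem 5.9: near $H^{n-1}$-almost every free-boundary point, $u$ behaves like $q_u\max\{-x\cdot\nu,0\}$ with $q_u$ bounded above by $C(n,\|\phi\|_{C^1},\epsilon)$, and the correct bookkeeping — comparing the $\frac{1}{\epsilon}$-weighted volume against the squared-gradient energy of a boundary layer — shows the penalty wins for $\epsilon$ small. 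Careful tracking of how the constants depend on $\epsilon$ is the crux, and the asymmetric choice of slopes in $f_\epsilon$ is precisely what makes both directions close.
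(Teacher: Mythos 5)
Your proposal takes a completely different route from the paper, whose entire proof is a citation: it observes that in $D^c$ the minimizer solves exactly the penalized problem of Aguilera--Alt--Caffarelli \cite{AAC} with a Lipschitz boundary datum $u|_{\partial D}$, and then invokes Theorem 7 of that paper, which is precisely the statement that the penalization adjusts the volume exactly once $\epsilon$ is small. Within your direct approach, the undershoot half is essentially correct and standard: if $|\{u>0\}\setminus D|<m$, a bump of height $h$ supported in a ball $B_\rho\subset D^c\setminus\overline{\{u>0\}}$ costs $\sim h^2\rho^{n-2}$ in Dirichlet energy and gains $\sim\epsilon\rho^n$ in penalty, so $h\ll\sqrt{\epsilon}\,\rho$ contradicts the local minimality of Theorem 5.1. (Note you do not need the deficit bounded below uniformly in $\epsilon$, only positive, since $\rho$ may depend on $\epsilon$; and the energy bound $f_\epsilon\le M$ controls the \emph{overshoot}, $V-m\le M\epsilon$, not the deficit, so your appeal to it there is misdirected.)

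The genuine gap is the overshoot half, and you half-identified it yourself before waving it away. First, the only bounds this paper proves (Theorem 4.6, Corollary 4.8, Theorem 5.7) give $\mathrm{Lip}(u)$ and $q_u$ of order $1/\epsilon$; with that information the energy cost of removing a unit of volume near the free boundary is of order $q_u^2\sim 1/\epsilon^2$, which \emph{overwhelms} the $1/\epsilon$ penalty savings, so no ``correct bookkeeping'' with Theorem 5.9 can close the inequality in the direction you need. Second, and more fundamentally, no shell-removal competitor can ever work: if $|\{u>0\}\setminus D|>m$, the first variation under domain perturbations forces the free boundary condition $\tfrac12 q_u^2=1/\epsilon$, which says exactly that the energy cost of pushing the free boundary inward cancels the penalty savings to first order; squeezing the positive phase further only steepens the gradient, so the comparison never becomes strict. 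Ruling out overshoot therefore requires an argument of a different nature --- the mechanism behind the cited Theorem 7 of \cite{AAC}: when $V>m$ the multiplier identity pins $q_u$ at $\sqrt{2/\epsilon}$, non-degeneracy then forces linear growth of $u$ at rate $\sqrt{2/\epsilon}$ away from the free boundary, and since $0\le u\le\max\phi$ this confines the free boundary, and with it the entire positive phase outside $D$, to a collar around $\overline D$ of width $O(\sqrt{\epsilon})$, whose volume is $\ll m$ for small $\epsilon$ --- a contradiction. This combined use of the free boundary condition (Theorems 5.7--5.8), non-degeneracy (Lemma 5.5), and the $L^\infty$ bound is the idea missing from your proposal; competitor bookkeeping alone cannot substitute for it.
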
 

\begin{proof}
Note that in the exterior domain $D^c$ our solution solves the problem in \cite{AAC} with a Lipschitz boundary datum. Hence Theorem 7 there can be applied.
\end{proof} 
\begin{rem}
In particular we do not need to send $\epsilon\to 0$.
\end{rem} 
We collect results on $u_\epsilon$ for small $\epsilon$ as in the previous proposition to obtain the following:

\begin{thm}
For $\epsilon>0$ small, $u_\epsilon$ optimizes the Physical Problem.
\end{thm}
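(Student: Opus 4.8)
The plan is to check the two defining requirements of the Physical Problem separately: first that $u_\epsilon\in K_0$, and then that $u_\epsilon$ minimizes the Dirichlet energy over $K_0$. Membership in $K_0$ is a matter of collecting earlier results. The bound $u_\epsilon\ge\phi$ passes to the limit from Proposition 3.5 together with the locally uniform convergence of Corollary 4.8. The distributional sign conditions $\Delta u_\epsilon\le 0$ in $D$ and $\Delta u_\epsilon=0$ in $\{u_\epsilon>0\}\setminus D$ are two of the four statements collected in Corollary 5.2. Finally, the volume constraint $|\{u_\epsilon>0\}\setminus D|=m$ is precisely Proposition 6.1, valid for small $\epsilon$. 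Together these place $u_\epsilon$ in $K_0$.

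For minimality the key observation is that $f_\epsilon$ vanishes at $m$, so that for every $v\in K_0$ one has $f_\epsilon(|\{v>0\}\setminus D|)=f_\epsilon(m)=0$, and likewise for $u_\epsilon$ by Proposition 6.1; hence it suffices to compare the bare Dirichlet energies. Rather than rely on the local minimality of Theorem 5.1, I would return to the genuinely global minimizers $u_{\kappa_1,\kappa_2,\epsilon}$ and pass to the limit. Since $v\in K_0\subset H^1_0(\R)$ is an admissible competitor for each $J_{\kappa_1,\kappa_2,\epsilon}$, minimality gives
$$J_{\kappa_1,\kappa_2,\epsilon}(u_{\kappa_1,\kappa_2,\epsilon})\le J_{\kappa_1,\kappa_2,\epsilon}(v).$$
On the right, $A_{\kappa_1}(v-\phi)\equiv 0$ because $v\ge\phi$, while $\int_{D^c}B_{\kappa_2}(v)\to|\{v>0\}\setminus D|=m$ by dominated convergence (the positive set has finite measure $m$ and $B_{\kappa_2}(v)\le\chi_{\{v>0\}}$), so the right-hand side tends to $\int|\nabla v|^2/2$. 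On the left, $A_{\kappa_1}\ge 0$ may be discarded, weak lower semicontinuity of the Dirichlet energy along the subsequence of Corollary 4.8 controls the gradient term from below, and the remaining term is handled by Fatou: on $\{u_\epsilon>0\}\cap D^c$ one has $B_{\kappa_2}(u_{\kappa_1,\kappa_2,\epsilon})\to 1$, whence $\liminf\int_{D^c}B_{\kappa_2}(u_{\kappa_1,\kappa_2,\epsilon})\ge m$ and, $f_\epsilon$ being continuous and nondecreasing with $f_\epsilon(m)=0$, its $\liminf$ is $\ge 0$. Combining, $\int|\nabla u_\epsilon|^2/2\le\liminf J_{\kappa_1,\kappa_2,\epsilon}(u_{\kappa_1,\kappa_2,\epsilon})\le\int|\nabla v|^2/2$, which is exactly the claimed minimality.

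The main obstacle is the left-hand side, and specifically the term $f_\epsilon(\int_{D^c}B_{\kappa_2}(u_{\kappa_1,\kappa_2,\epsilon}))$: since $f_\epsilon$ is negative below $m$ (its slope to the left of $m$ is positive), one cannot simply drop it, and must instead show the penalized volume does not fall below $m$ in the limit. This is where the one-sided Fatou estimate is essential, and it is the only place where the asymmetric construction of $f_\epsilon$ and the convergence of the positive phase on $\{u_\epsilon>0\}$ are used. An alternative I would keep in reserve is to argue directly from the local minimality of Theorem 5.1 together with the localization of the positive phase flagged in the introduction, so that $v-u_\epsilon$ is compactly supported and $f_\epsilon$ is evaluated at exactly $m$ for both functions; this route avoids the sign issue entirely but requires controlling the support of a general competitor.
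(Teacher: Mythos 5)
Your proposal is correct, and the minimality half takes a genuinely different route from the paper. For membership in $K_0$ you and the paper do the same thing: Proposition 6.1 for the volume, Corollary 5.2 for the signs of $\Delta u_\epsilon$, and passage to the limit of $u\ge\phi$. For minimality, however, the paper simply observes that $f_\epsilon(|\{u_\epsilon>0\}\setminus D|)=f_\epsilon(m)=0$ and then quotes the local minimality of Theorem 5.1: since every $v\in K_0$ also has $f_\epsilon(|\{v>0\}\setminus D|)=0$, comparison of $J_\epsilon$ reduces to comparison of Dirichlet energies. This is very short, but strictly speaking Theorem 5.1 only covers competitors $v$ with $v-u_\epsilon$ supported in a ball, so the paper's two-line proof implicitly leans on the localization of the positive phase (Theorem 6.4, or an exhaustion argument) to handle a general $v\in K_0$ --- exactly the caveat you flag at the end of your write-up. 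Your argument instead returns to the three-parameter minimizers $u_{\kappa_1,\kappa_2,\epsilon}$, which are \emph{global} minimizers of $J_{\kappa_1,\kappa_2,\epsilon}$ over $H^1_0(\R)$, and passes to the limit: dominated convergence trivializes the right-hand side because competitors in $K_0$ have volume exactly $m$, while weak lower semicontinuity, positivity of $A_{\kappa_1}$, and the Fatou-plus-monotonicity estimate $\liminf f_\epsilon\bigl(\int_{D^c}B_{\kappa_2}(u_{\kappa_1,\kappa_2,\epsilon})\bigr)\ge f_\epsilon(m)=0$ control the left-hand side. Your identification of the sign issue (that $f_\epsilon<0$ below $m$, so the penalization term cannot simply be discarded) is precisely the point that makes the one-sided Fatou step necessary, and it is handled correctly. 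The trade-off: the paper's proof is essentially free once Theorem 5.1 is in place, whereas yours re-runs a compactness argument; in exchange you obtain unconditional global minimality over all of $K_0$, with no restriction on the support of the competitor, which is what the Physical Problem literally asks for.
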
 

\begin{proof}
Since $|\{u>0\}\backslash D|=m$, $f(|\{u>0\}\backslash D|)$ vanishes. Thus $u$ is a minimizer for the Dirichlet energy over functions above $\phi$.

Also Corollary 5.2 establishes right signs on the Laplacian of $u$. Hence $u\in K_0$.
\end{proof} 

The next theorem states that positive phase is well localized inside a bounded set. As a result, outside the interior contact set, any local estimate can be upgraded to global estimate with constants independent of the compact set.  Also, the optimization in $\R$ is actually the same as in a big but bounded set. 

\begin{thm}
Let $u$ be a minimizer, then $$diam(\{u>0\})\le diam(D)+1+C(n)m\frac{\|\phi\|_{C^1}+1/\epsilon}{\epsilon}.$$
\end{thm}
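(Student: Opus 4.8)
The plan is to show that $\{u>0\}$ cannot extend too far from $D$ by using the volume constraint together with the lower density estimate (Lemma 5.7) and the non-degeneracy lemma (Lemma 5.6). The intuition is that each unit of distance traveled away from $D$ by the positive phase must ``pay'' a definite amount of positive-phase volume, and since the total exterior volume is bounded by $m$ (plus whatever competitor volume is forced), the positive phase must terminate within a controlled distance.

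First I would set up a chain/covering argument along a ray escaping $D$. Suppose $x_0\in\overline{\{u>0\}}$ lies at distance $d$ from $\overline D$, with $d$ large. Since $\phi$ is compactly supported in $D$, the contact set $D\cap\{u=\phi\}$ is contained in $D$, so any ball $B_R(y)$ with $y$ far from $D$ satisfies $B_R(y)\subset (D\cap\{u=\phi\})^c$, and both Lemma 5.6 and Lemma 5.7 are available there. I would select a maximal collection of disjoint balls $B_{R}(y_i)$ of a fixed radius $R$ (chosen of order $1$, or tied to $\epsilon$) centered at points $y_i\in\overline{\{u>0\}}$ strung out between $\partial D$ and $x_0$; disjointness makes their volumes additive. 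Lemma 5.7 gives
\begin{equation}
|B_R(y_i)\cap\{u>0\}|\ge \frac{c(n)\epsilon}{\|\phi\|_{C^1}+1/\epsilon}|B_R|
\end{equation}
for each such ball. The number $N$ of disjoint balls needed to reach distance $d$ is at least of order $d/R$. Summing the density estimate over the disjoint balls and comparing with the total exterior positive volume yields
\begin{equation}
m \;\ge\; |\{u>0\}\setminus D| \;\ge\; N\cdot \frac{c(n)\epsilon}{\|\phi\|_{C^1}+1/\epsilon}|B_R| \;\gtrsim\; \frac{d}{R}\cdot R^n\cdot \frac{c(n)\epsilon}{\|\phi\|_{C^1}+1/\epsilon},
\end{equation}
which rearranges to bound $d$ by $C(n)m(\|\phi\|_{C^1}+1/\epsilon)/\epsilon$ after absorbing the fixed powers of $R$, matching the stated form (the additive $\mathrm{diam}(D)+1$ accounting for the starting region and one buffer ball).

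The main obstacle I anticipate is organizing the chain of disjoint balls so that its cardinality $N$ is genuinely proportional to the distance $d$ while keeping every ball inside $(D\cap\{u=\phi\})^c$ so the density estimate applies; this requires using connectedness of $\{u>0\}$ along a path (or propagating non-degeneracy point-by-point away from $\partial D$) to guarantee that the positive phase actually persists at each intermediate scale rather than reappearing far away as a separate component. One must also verify that $m$, not merely the competitor volume, is the correct bound on $|\{u>0\}\setminus D|$ — here I would invoke Proposition 6.1, which gives $|\{u>0\}\setminus D|=m$ exactly for small $\epsilon$, so the exterior volume is controlled. Finally I would choose the fixed radius $R$ (e.g.\ normalized to absorb the $R^{n-1}$ versus $R^n$ bookkeeping) and check that the non-degeneracy threshold $c(n)\epsilon$ in Lemma 5.6 is consistent with the Lipschitz constant $\|\phi\|_{C^1}+1/\epsilon$ appearing in Lemma 5.7, since these are precisely the quantities producing the final ratio $(\|\phi\|_{C^1}+1/\epsilon)/\epsilon$.
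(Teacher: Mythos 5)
Your proposal is essentially the paper's own proof: the paper likewise combines the lower density estimate of Lemma 5.7 with the exterior volume bound $m$ and a ball-counting argument, merely organizing the count via a Vitali covering by disjoint unit balls (whose radius-$5$ dilations form the chain back to $D$) instead of your hand-built chain of disjoint balls along a ray. The connectivity obstacle you flag is genuine but equally implicit in the paper's final chaining step, and is resolved by noting that every connected component of $\{u>0\}$ must touch $\overline{D}$ --- otherwise $u$ would be harmonic in that component with zero boundary values and hence vanish there by the maximum principle.
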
 

\begin{proof}
For $x\in\overline{\{u>0\}}$ and $dist(x,D)\ge 1$, $B_1(x)\cap D=\emptyset$ and consequently $$|\{u>0\}\cap B_1(x)|\ge \frac{C(n)\epsilon}{\|\phi\|_{C^1}+1/\epsilon}.$$

By Vitali we reduce $\{B_1(x)\}_{x\in\overline{\{u>0\}} \text{ and } dist(x,D)\ge 1}$ to a disjoint subcollection $\{B_1(x_j)\}_{j\in J}$ with  $\{B_5(x_j)\}_{j\in J}$ still covers $\overline{\{u>0\}}\cap \{dist(x,D)\ge 1\}$.

Thus one has the following 
\begin{align*}m&=|\{u>0\}\cap D^c|\\&\ge\Sigma_J|B_1(x_j)\cap\{u>0\}|\\&\ge\frac{C(n)\epsilon}{\|\phi\|_{C^1}+1/\epsilon}Card(J).
\end{align*}

As a result we have estimate on the cardinality of $J$. Note that any $x\in\{u>0\}$ can be connected to $\{dist(x,D^c)\le 1\}$ through a chain of at most $Card(J)$ balls of radius 5, we have the desired estimate.

\end{proof}

%%%%%%%%%%%%%%%%%%%%%%%%%%%%%%%%%%%%%%%%%%%%%%%%%%%%
%%%%%%%%%%%%%%%%%%%%%%%%%%%%%%%%%%%%%%%%%%%%%%%%%%%%
\section*{Acknowledgement} The author would like to thank his PhD advisor, Luis Caffarelli, for many valuable conversations regarding this project. He is also grateful to his colleagues and friends, especially Luis Duque, Dennis Kriventsov and Yijing Wu,  for all the discussions and encouragement. 

%%%%%%%%%%%%%%%%%%%%%%%%%%%%%%%%%%%%%%%%%%%%%%%%%%%%
%%%%%%%%%%%%%%%%%%%%%%%%%%%%%%%%%%%%%%%%%%%%%%%%%%%%

\end{document}